\newtheorem{theorem}{Theorem}[section]
\newtheorem{corollary}[theorem]{Corollary}
\newtheorem{lemma}[theorem]{Lemma}
\newtheorem{prop}[theorem]{Proposition}
\theoremstyle{definition}
\newtheorem{example}[theorem]{Example}
\newtheorem{remark}[theorem]{Remark}
\newtheorem*{ack}{Acknowledgments}
\newcommand{\Z}{\mathbb{Z}}
\newcommand{\Q}{\mathbb{Q}}
\newcommand{\C}{\mathbb{C}}
\newcommand{\T}{\mathbb{T}}
\newcommand{\CP}{\mathbb{CP}}
\newcommand{\RP}{\mathbb{RP}}
\newcommand{\LL}{\mathbb{L}}
\newcommand{\A}{{\mathcal A}}
\DeclareMathOperator{\corank}{corank}
\DeclareMathOperator{\im}{im}
\DeclareMathOperator{\Hom}{Hom}
\DeclareMathOperator{\SO}{SO}
\DeclareMathOperator{\abf}{{abf}}
\DeclareMathOperator{\ord}{{ord}}
\DeclareMathOperator{\lcm}{{lcm}}
\DeclareMathOperator{\gr}{gr}
\DeclareMathOperator{\Lie}{Lie}
\newcommand{\Tors}{{\mathcal{T}}}
\newcommand{\h}{\mathfrak{h}}
\newcommand{\m}{\mathfrak{m}}
\newcommand{\wh}{\widehat{\h}}
\newcommand{\surj}{\twoheadrightarrow}
\newcommand{\isom}{\xrightarrow{\,\simeq\,}}
\newcommand{\abs}[1]{\left| #1 \right|}
\begin{document}
%\date{October 10, 2008}
%\date{December 8, 2009}

\title[Quasi-K\"{a}hler $3$-manifold groups]%
{Quasi-K\"{a}hler groups, $3$-manifold groups, and formality}

\author[A.~Dimca]{Alexandru Dimca$^{1,2}$}
\address{Laboratoire J.A.~Dieudonn\'{e}, UMR du CNRS 6621, 
Universit\'{e} de Nice--Sophia Antipolis, Parc Valrose,
06108 Nice Cedex 02, France}
\email{dimca@math.unice.fr}

\author[S.~Papadima]{Stefan Papadima$^{1,3}$}
\address{Institute of Mathematics Simion Stoilow, 
P.O. Box 1-764,
RO-014700 Bucharest, Romania}
\email{Stefan.Papadima@imar.ro}

\author[A.~Suciu]{Alexander~I.~Suciu$^{4}$}
\address{Department of Mathematics,
Northeastern University,
Boston, MA 02115, USA}
\email{a.suciu@neu.edu}

\thanks{$^{1}$Partially supported by the French-Romanian 
Programme LEA Math-Mode}
\thanks{$^{2}$Partially supported by ANR-08-BLAN-0317-02 (S\'{E}DIGA)}
\thanks{$^{3}$Partially supported by CNCSIS grant ID-1189/2009-2011}
\thanks{$^{4}$Partially supported by NSA grant 
H98230-09-1-0012 and an ENHANCE grant from 
Northeastern University}

\subjclass[2000]{Primary
14F35, %% Homotopy theory; fundamental groups
20F34,  %% Fundamental groups and their automorphisms
57N10.  %% Topology of general $3$-manifolds
Secondary
55N25,  %% Homology with local coefficients, equivariant cohomology
55P62.  %%  Rational homotopy theory
}

\keywords{Quasi-K\"{a}hler manifold, $3$-manifold, cut number, 
isolated surface singularity, $1$-formal group, cohomology ring, 
characteristic variety, resonance variety}

\begin{abstract}
In this note, we address the following question:  Which 
$1$-formal groups occur as fundamental groups of 
both quasi-K\"{a}hler manifolds and closed, connected, 
orientable $3$-manifolds. We classify all such groups,
at the level of Malcev completions, and compute their coranks. 
Dropping the assumption on realizability by $3$-manifolds, we 
show that the corank equals the isotropy index of the 
cup-product map in degree one. Finally, we examine the formality 
properties of smooth affine surfaces and quasi-homogeneous 
isolated surface singularities. In the latter case, we describe 
explicitly the positive-dimensional components of the first 
characteristic variety for the associated singularity link.
\end{abstract}
\maketitle

\section{Introduction}
\label{sect:intro}

\subsection{K\"{a}hler groups and $3$-manifold groups}
\label{subsec:intro1}

The following question was raised by W.~Goldman and 
S.~Donaldson in 1989, and independently by A.~Reznikov 
in 1993:  Which $3$-manifold groups are K\"{a}hler groups? 
Both kinds of groups have a geometric origin: the first kind 
arise as fundamental groups of compact, connected, orientable 
$3$-dimensional manifolds (without boundary), while the 
second kind arise as fundamental groups of compact, 
connected, K\"{a}hler manifolds.  

In \cite{Re}, Reznikov obtained a deep restriction on certain groups 
lying at the intersection of these two naturally defined classes. 
In \cite{DS07}, two of us answered the question 
in full generality: those groups which occur as both 
$3$-manifold groups and K\"{a}hler groups
are precisely the finite subgroups of $\SO(4)$. 

In this note, we pursue the analogous problem, for the 
more general class of quasi-K\"{a}hler groups.  Under 
a $1$-formality assumption (satisfied by all K\"{a}hler groups), 
we classify the Malcev completions of $3$-manifold groups 
that fall into this class.  

\subsection{Quasi-K\"{a}hler groups}
\label{subsec:intro2}

A manifold $X$ is called {\em quasi-K\"{a}hler}\/ if 
$X=\overline{X} \setminus D$, where $\overline{X}$ is 
a compact, connected K\"{a}hler manifold and $D$ is a 
divisor with normal crossings.  If a group $G$ can be realized 
as the fundamental group of such a manifold $X$, we say $G$ 
is a quasi-K\"{a}hler group---or, a quasi-projective group, 
if $\overline{X}$ is actually a projective variety. 
A natural question arises:  which $3$-manifold 
groups are quasi-K\"{a}hler?  

In \cite{DPS07}, we gave a complete answer to this question, 
within a restricted class of $3$-manifolds: those manifolds 
$M=M(\A)$ which occur as the boundary of a regular 
neighborhood of an arrangement $\A$ of lines in $\CP^2$.  
Using an explicit formula for the Alexander polynomial of 
$M(\A)$ from \cite{CS08}, we showed that $G=\pi_1(M(\A))$ is 
quasi-K\"{a}hler if and only if $\A$ is either a pencil of $n+1$ lines, 
in which case $M(\A)=\#^n S^1\times S^2$, or a pencil of 
$n$ lines together with an extra line in general position, 
in which case $M(\A)=S^1\times \Sigma_{n-1}$, where 
$\Sigma_g$ is the closed, orientable surface of genus $g$. 

Yet there are many more $3$-manifold groups which are 
quasi-K\"{a}hler.  For instance, if $M$ is the link of 
an isolated surface singularity with $\C^*$-action, then 
its fundamental group is quasi-projective, yet not necessarily 
of the above form.  A concrete example is provided by the 
Heisenberg nilmanifold, which occurs as the Brieskorn manifold  
$\Sigma(2,3,6)$. 

\subsection{Formality}
\label{subsec:intro3}
An important topological property of compact K\"{a}hler 
manifolds is their formality. 
Following D.~Sullivan \cite{Su77}, we say that a connected, 
finite-type CW-complex $X$ is {\em formal}\/ if its minimal 
model is quasi-isomorphic to $(H^*(X,\Q),0)$.  Examples 
of formal spaces include rational homology spheres, rational 
cohomology tori, surfaces, compact Lie groups, and complements 
of complex hyperplane arrangements.  On the other hand, 
the only nilmanifolds which are formal are tori. Formality 
is preserved under wedges and products of spaces, and 
connected sums of manifolds.  

A finitely generated group $G$ is said to be {\em $1$-formal}\/ if its 
Malcev completion---in the sense of D.~Quillen \cite{Qu}---is a 
quadratic complete Lie algebra.  If $X$ is formal, then $\pi_1(X)$ is 
$1$-formal, but in general the converse does not hold. 

As shown by Deligne, Griffiths,  Morgan, and Sullivan \cite{DGMS}, 
every compact K\"{a}hler manifold is formal. Thus, all 
K\"{a}hler groups are $1$-formal and quasi-K\"{a}hler.  
Yet the converse is far from being true.  
Indeed, work of Morgan \cite{M} and Kohno \cite{Ko} 
shows that fundamental groups of complements of projective 
hypersurfaces are $1$-formal.   
For example, let $F_n$ be the free group of rank $n>0$;  
then $F_n=\pi_1(\CP^1\setminus \{\text{$n+1$ points}\})$ 
is a $1$-formal, quasi-projective group, but definitely not a 
K\"{a}hler group.

\subsection{Cohomology jumping loci}
\label{subsec:intro4}
A crucial tool for us are the cohomology jumping loci 
associated to a finite-type CW-complex $X$.  The 
{\em characteristic varieties} $V^i_d(X)$ are 
algebraic subvarieties of  $\Hom (\pi_1(X), \C^*)$, 
while the {\em resonance varieties} $R^i_d(X)$ 
are homogeneous subvarieties of  $H^1(X,\C)$.  
The former are the jump loci for the cohomology of 
$X$ with rank one twisted complex coefficients, 
while the latter are the jump loci for the homology of the cochain 
complexes arising from multiplication by degree $1$ classes in 
$H^*(X,\C)$.  The jumping loci of a group are defined in terms 
of the jumping loci of the corresponding classifying space. 

In general, the characteristic varieties are rather complicated objects. 
If the group $G=\pi_1(X)$ is $1$-formal, though, we showed in 
\cite{DPS-jump} that the analytic germ of $V^1_d(X)$ at $1$ 
coincides with the analytic germ of $R^1_d(X)$ at $0$, and  
thus may be recovered from the cup-product map 
$\cup_X\colon H^1(X,\C)\wedge H^1(X,\C) \to H^2(X,\C)$. 

Foundational results on the structure of the cohomology 
support loci for local systems on K\"{a}hler and quasi-K\"{a}hler 
manifolds were obtained by Beauville \cite{Be}, Green--Lazarsfeld 
\cite{GL}, Simpson \cite{Sp92}, and  Arapura~\cite{Ar}.  
In particular,  if $X$ is a quasi-K\"{a}hler 
manifold, then each variety $V^1_1(X,\C)$ is a union of 
(possibly translated) subtori of $\Hom (G ,\C^*)$. 

\subsection{Classification up to Malcev completion}
\label{subsec:intro5}

Our first main result gives the classification of quasi-K\"{a}hler, 
$1$-formal, $3$-manifold groups, at the level of Malcev completions.

\begin{theorem}
\label{thm:malclass}
Let $G$ be the fundamental group of a compact, connected, orientable
$3$-manifold. Assume $G$ is $1$-formal. Then the following are equivalent.
\begin{enumerate}
\item \label{m1}
The Malcev completion of $G$ is isomorphic to the Malcev completion
of a quasi-K\"{a}hler group.

\item \label{m2}
The Malcev completion of $G$ is isomorphic to the Malcev completion
of the fundamental group of $S^3$, $\# ^n S^1\times S^2$, or 
$S^1\times \Sigma_g$, with $g\ge 1$.
\end{enumerate}
\end{theorem}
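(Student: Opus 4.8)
The plan for the implication \eqref{m2}$\Rightarrow$\eqref{m1} is to produce explicit quasi-projective realizations of the three families. The trivial group is $\pi_1(\CP^2)$; the free group $F_n=\pi_1(\#^n S^1\times S^2)$ is also $\pi_1(\CP^1\setminus\{n+1 \text{ points}\})$; and $\pi_1(S^1\times\Sigma_g)=\Z\times\pi_1(\Sigma_g)=\pi_1(\C^*\times\Sigma_g)$, with $\C^*\times\Sigma_g$ a smooth quasi-projective variety. In each case the very same group is quasi-K\"{a}hler, so \eqref{m1} holds on the nose. For \eqref{m1}$\Rightarrow$\eqref{m2} I would first reduce to the cup-product map. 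Since $G$ is $1$-formal, its Malcev completion is the degree completion of the holonomy Lie algebra $\h(G)$, which depends only on $\cup_G\colon H^1(G,\Q)\wedge H^1(G,\Q)\to H^2(G,\Q)$. If this completion is isomorphic to that of a quasi-K\"{a}hler group $Q$, then it is a quadratic Lie algebra, so $Q$ is itself $1$-formal, the holonomy data of $G$ and $Q$ are identified, and in particular $R^1_1(G)\cong R^1_1(Q)$.

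Combining Arapura's theorem \cite{Ar} (the components of $V^1_1(Q)$ are torsion-translated subtori) with the $1$-formal comparison $R^1_1=\mathrm{TC}_1(V^1_1)$ of \cite{DPS-jump}, I conclude that $R^1_1(G)$ is a finite union of rational linear subspaces meeting pairwise only at $0$, each positive-dimensional component $L$ being \emph{isotropic}: the restricted map $\cup|_{\wedge^2 L}$ either vanishes or has one-dimensional image with nondegenerate associated form. I would then feed in Poincar\'{e} duality. For the closed orientable $3$-manifold $M$ one has $H^2\cong (H^1)^*$, and the cup product is encoded by the alternating triple form $\mu\in\wedge^3 H^1(M,\Q)^*$ via $(a\cup b)(c)=\mu(a,b,c)$, so the whole resonance picture becomes a statement about $\mu$. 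For $a\in H^1$ let $\omega_a$ be the skew form $(b,c)\mapsto\mu(a,b,c)$; then $\ker(a\cup\cdot)$ is its radical, and $a\in R^1_1$ precisely when $\dim\operatorname{rad}(\omega_a)\ge 2$.

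The crux is to show that $\mu$ is either $0$ or decomposable as $t\wedge\omega$ with $t$ a $1$-form and $\omega$ nondegenerate. I would organize the argument by the radical $\operatorname{rad}(\mu)=\{v:\iota_v\mu=0\}$. Any $v\in\operatorname{rad}(\mu)$ lies in $\operatorname{rad}(\omega_a)$ for every $a$, as does $a$ itself; hence if $\operatorname{rad}(\mu)\neq 0$, or if $b_1(M)$ is even (where a parity count on the even rank of $\omega_a$ already forces $\dim\operatorname{rad}(\omega_a)\ge 2$), then $R^1_1(M)=H^1(M)$. Since a vector space is not a union of proper subspaces, isotropy must then apply to all of $H^1$, i.e.\ $\cup_G$ has image of dimension $\le 1$; but an elementary computation shows the triple form of a $3$-manifold can never have cup product of rank exactly one, so these cases force $\mu=0$.

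The remaining and hardest case is $\mu\neq 0$ with trivial radical, where $R^1_1$ is a proper subvariety. Here I would exploit a nondegenerate $1$-isotropic component $L$ supplied by the quasi-K\"{a}hler structure: writing its image line as $\langle t\rangle\subset H^2\cong (H^1)^*$, the identity $\mu(a,b,c)=\omega_L(a,b)\,t(c)$ for $a,b\in L$, together with the cyclic symmetry of $\mu$, should force $\mu$ to be divisible by $t$ and $L$ to be a symplectic complement to the $t$-direction, yielding $\mu=t\wedge\omega$ with $\omega$ nondegenerate and $b_1(M)=\dim L+1$ odd. Finally I read off the Malcev completions: $\mu=0$ gives $\ker\cup_G=\wedge^2H^1$, so $\h(G)$ is free and the completion is that of $F_n=\pi_1(\#^n S^1\times S^2)$ (or of the trivial group $\pi_1(S^3)$ when $n=0$); while $\mu=t\wedge\omega$ with $\rank\omega=2g$ forces $b_1(M)=2g+1$ and reproduces the holonomy data of $\Z\times\pi_1(\Sigma_g)=\pi_1(S^1\times\Sigma_g)$. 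The main obstacle is exactly this trivial-radical step: extracting decomposability of $\mu$ from the bare hypothesis that $R^1_1$ is a union of nondegenerate isotropic subspaces, where Poincar\'{e} duality and the pencil (nondegeneracy) structure of quasi-K\"{a}hler resonance components have to be used in tandem.
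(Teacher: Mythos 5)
Your framework is the right one and, in its first half, tracks the paper's argument: reduce everything to the trilinear form $\mu_M$ via $1$-formality and Poincar\'{e} duality; use the fact that $R_1$ of a ($1$-formal) quasi-K\"{a}hler group is a finite union of $0$- or $1$-isotropic \emph{linear} subspaces; and observe that $H^1(M,\C)$ is never $1$-isotropic, so that $R_1=H^1$ (which covers the even $b_1$ and nonzero-radical cases) forces $\mu_M=0$ and hence $\m(G)=\m(F_n)$. That branch is correct. Your proposed endgame for the other branch is also workable: once one has a $1$-isotropic \emph{hyperplane} $L$ with nondegenerate restricted form, the cyclic symmetry of $\mu$ does force $t|_L=0$ and $\mu=t\wedge\omega$, and this identification of $\mu_M$ with $\mu_{S^1\times\Sigma_g}$ is a legitimate alternative to the paper's device of mapping $X$ to $S^1\times C$ and invoking Lemma \ref{lem:sullivan}.

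The genuine gap is exactly the step you flag, and it cannot be closed by ``Poincar\'{e} duality and the pencil structure in tandem'' alone: you must prove (i) that $R_1$ has a $1$-isotropic positive-dimensional component at all, and (ii) that this component is a \emph{hyperplane}. Neither follows from what you have assembled. A priori the components of $R_1(M)$ --- a Pfaffian degeneracy locus --- can have codimension $2$ or $3$; then your identity $\mu(a,b,c)=\omega_L(a,b)\,t(c)$ on a small $L$ determines nothing about $\mu$ on a complement, and a $0$-isotropic component of codimension $\ge 2$ is not excluded by Theorem~\ref{thm:res3}\eqref{r3}. The paper's Step 3 supplies the missing input: McMullen's theorem that the Alexander ideal of a $3$-manifold group is almost principal gives $V_1(M)\cap\T^0(\pi)=V(\Delta^{\pi})$, a hypersurface through $1$ once $b_1\ge 4$; the tangent cone theorem then forces every component of $R_1(M)=R_1(X)$ to have codimension one; and the fact that distinct components of $R_1(X)$ meet only at $0$ forces $R_1(X)$ to be a single rational hyperplane, which must then be $1$-isotropic by Theorem~\ref{thm:res3}\eqref{r3}. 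Without this chain your ``trivial radical'' case does not close. Note also that this mechanism cannot reach the $3$-torus ($b_1=3$, $\mu\ne 0$, yet $R_1$ has no positive-dimensional components), so $b_1=3$ must be treated separately, as the paper does by observing that a nonzero alternating $3$-form on a $3$-dimensional space is automatically $\mu_{T^3}$.
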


Thus, when viewed through the prism of their Malcev Lie 
algebras, all quasi-K\"{a}hler, $1$-formal, $3$-manifold groups look 
like either a trivial group, a free group, or an infinite cyclic group 
times a surface group. 

The proof relies heavily on the structure of the resonance 
varieties of quasi-K\"{a}hler, $1$-formal groups, as described 
in \cite{DPS-jump}, and the resonance varieties of 
$3$-manifold groups, as described in \cite{DS07}.  An important 
role is played by the $0$-isotropic and $1$-isotropic subspaces 
of $H^1(G,\C)$, and their position with respect to the resonance 
variety $R^1_1(G)$. 

\subsection{Corank and isotropy index}
\label{subsec:intro6}
For a connected CW-complex $X$ with finite $1$-skeleton, the 
{\em isotropy index}\/ $i(X)$ is the maximum dimension of those 
subspaces $W\subseteq  H^1(X, \C)$ for which the restriction of 
$\cup_X$ to $W\wedge W$ is zero. When $X$ is a smooth complex 
algebraic variety, these subspaces $W$ are precisely the maximal 
isotropic subspaces considered by Catanese \cite{Ca} 
and Bauer \cite{Ba}.

For a finitely generated group $G$, put $i(G)=i(K(G, 1))$; it is 
readily seen that $i(X)=i(\pi_1(X))$.  It turns out that the isotropy 
index $i(G)$ depends only on $\m(G)$, the Malcev completion 
of the group. 

The {\em corank}\/ of $G$ is the 
largest integer $n$ such that there is an epimorphism 
$G\surj F_n$.  This is a subtler invariant, depending on 
more information, even for $1$-formal groups.  For example, 
if $G=\pi_1(\#^3 \RP^2)$, then $G$ is $1$-formal, $\m(G)=\m(F_2)$, 
yet $\corank(G)<\corank(F_2)$. Restricting to the class of 
($1$-formal) quasi-K\"{a}hler groups, we show that the corank 
and the isotropy index coincide; thus, such a phenomenon 
cannot happen within this class. The result below improves Lemma 6.16
from \cite{DPS-jump}.

\begin{theorem}
\label{thm:cutiso}
Let $G$ be a quasi-K\"{a}hler, $1$-formal group.  Then
$\corank(G)=i(G)$. In particular, the corank of $G$ depends  
only on its Malcev completion. 
\end{theorem}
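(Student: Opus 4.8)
The plan is to establish the two inequalities $\corank(G)\le i(G)$ and $\corank(G)\ge i(G)$ separately, the first being purely formal. Starting from an epimorphism $\phi\colon G\surj F_n$ that realizes the corank, I would observe that $\phi^{*}\colon H^1(F_n,\C)\to H^1(G,\C)$ is injective, with $n$-dimensional image $W$. Since $H^2(F_n,\C)=0$, naturality of cup products gives $\phi^{*}(\al)\cup_G\phi^{*}(\be)=\phi^{*}(\al\cup_{F_n}\be)=0$ for all $\al,\be$, so $W$ is $0$-isotropic and $i(G)\ge n$. Maximizing over $\phi$ yields $\corank(G)\le i(G)$ for any finitely generated group, with neither formality nor quasi-K\"ahlerness needed here.

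For the reverse inequality I would fix a $0$-isotropic subspace $W\subseteq H^1(G,\C)$ with $\dim W=i(G)=:n$. The cases $n\le 1$ are immediate: $i(G)=0$ forces $b_1(G)=0$ and hence $\corank(G)=0$, while $i(G)\ge 1$ produces an epimorphism $G\surj\Z=F_1$. So assume $n\ge 2$. Then, using $\dim W\ge 2$, every nonzero $a\in W$ lies in $R^1_1(G)$, since some $b\in W$ independent from $a$ satisfies $a\cup_G b=0$; hence $W\subseteq R^1_1(G)$. Invoking the structure theorem of \cite{DPS-jump}, $R^1_1(G)$ is a finite union of linear subspaces whose positive-dimensional components are each $0$- or $1$-isotropic and meet pairwise only at $0$. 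As a linear subspace inside a finite union of linear subspaces, $W$ must lie in a single component $P$, which is positive-dimensional since $\dim P\ge\dim W\ge 2$.

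It then remains to analyze the two types of component. If $P$ is $0$-isotropic, then $P$ is itself isotropic, so $\dim P\le i(G)=\dim W$, which together with $W\subseteq P$ forces $W=P$. By Arapura's realization of resonance components through admissible maps \cite{Ar, DPS-jump}, $P=f^{*}H^1(S,\C)$ for a surjection $f\colon X\to S$ with connected fibers onto a smooth curve $S$; the vanishing of $\cup_G$ on $P$ forces $S$ to be non-compact, so $\pi_1^{\mathrm{orb}}(S)$ is the free product of a free group of rank $\dim P=n$ with finitely many finite cyclic groups, and $f$ induces an epimorphism onto it. Killing the torsion factors gives $G\surj F_n$, whence $\corank(G)\ge n$. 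If instead $P$ is $1$-isotropic, the cup product restricts to a nondegenerate alternating form on $P$, forcing $\dim P=2g$ and $\dim W\le g$, so $n\le g$; here $f$ maps onto a compact curve of genus $g$, and the composite $G\surj\pi_1^{\mathrm{orb}}(\Sigma_g)\surj\pi_1(\Sigma_g)\surj F_g$ (the last map killing half of a symplectic basis of handles) gives $\corank(G)\ge g\ge n$. In either case $\corank(G)\ge i(G)$, and equality holds. The final assertion is then automatic: $i(G)$ is computed from the degree-one cup product, which is encoded in $\m(G)$, so $\corank(G)=i(G)$ depends only on the Malcev completion.

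The step I expect to be the main obstacle is the passage from the linear-algebraic invariant to an honest group epimorphism: knowing that $W$ sits inside a resonance component is not by itself enough, and one must use the geometry of admissible maps to produce a free quotient of the correct rank. The delicate point is the orbifold bookkeeping in the $0$-isotropic case---checking that the non-compact target has free part of rank exactly $\dim P$ and that $f_{*}$ is onto---together with matching the $1$-isotropic components to surface-group quotients. This is precisely where the $0$-versus-$1$-isotropic dichotomy of \cite{DPS-jump} and the pencil structure of \cite{Ar} do the essential work, sharpening the bound of Lemma~6.16 in \cite{DPS-jump} to an equality.
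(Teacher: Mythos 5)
Your proposal is correct and follows essentially the same route as the paper's proof (the easy inequality \eqref{eq:coind} by pulling back $H^1(F_n,\C)$, and the reverse inequality as in Lemma~\ref{lem:corank}: place a maximal $0$-isotropic $W$ inside a component $W_f$ of $R_1(X)$ via the structure theorem of \cite{DPS-jump}, then split on whether the target curve is compact, using $G\surj \pi_1(C)=F_b$ with $b=\dim W_f\ge \dim W$ in the non-compact case and $G\surj \Pi_g\surj F_g$ with $g\ge \dim W$ in the compact case). Your detour through orbifold fundamental groups in the non-compact case is harmless but unnecessary, since admissibility already gives surjectivity of $f_{\sharp}$ onto the free group $\pi_1(C)$.
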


The $1$-formality assumption is crucial here. For example, 
let $G$ be the fundamental group of the Heisenberg 
nilmanifold. As noted  in  \S\ref{subsec:intro2} (see also 
Morgan \cite[p.~203]{M}), $G$ is a quasi-K\"{a}hler group.  
Nevertheless, $\corank(G)=1$, yet $i(G)=2$.  

Suppose now our quasi-K\"{a}hler, $1$-formal group  
is also a $3$-manifold group. 
Using Theorems \ref{thm:malclass} and \ref{thm:cutiso}, 
we can compute its corank explicitly.

\begin{corollary}
\label{cor:cutiso}
Let $G$ be a quasi-K\"{a}hler, $1$-formal, $3$-manifold group.  
Then the Malcev completion of $G$ is that of either $1$, $F_n$, 
or $\Z\times \pi_1(\Sigma_g)$, in which case $\corank(G)$ 
equals $0$, $n$, or $g$. In particular, if $b_1(G)=2k$, then $\m(G)=\m(F_{2k})$, 
and $\corank(G)=b_1(G)$.  
\end{corollary}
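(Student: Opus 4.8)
The plan is to combine the two theorems above with a direct computation of the isotropy index on three model spaces. First I would apply Theorem~\ref{thm:malclass}: since $G$ is a $1$-formal, quasi-K\"ahler, $3$-manifold group, its Malcev completion coincides with that of $\pi_1(S^3)=1$, of $\pi_1(\#^n S^1\times S^2)=F_n$, or of $\pi_1(S^1\times\Sigma_g)=\Z\times\pi_1(\Sigma_g)$ for some $g\ge 1$; this already yields the first assertion. For the coranks, I would invoke Theorem~\ref{thm:cutiso}, which gives $\corank(G)=i(G)$, together with the fact recorded in \S\ref{subsec:intro6} that the isotropy index depends only on the Malcev completion. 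Hence $\corank(G)=i(X)$, where $X$ is the model space matching $\m(G)$, and it remains to compute $i$ for each model.

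The first two cases are immediate. For $S^3$ we have $H^1=0$, so $i=0$. For $\#^n S^1\times S^2$ the fundamental group is $F_n$, whence the cup product $H^1\wedge H^1\to H^2$ vanishes (it factors through $H^2(F_n)=0$), all of $H^1$ is isotropic, and $i=n$.

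The computation for $X=S^1\times\Sigma_g$ (with $g\ge1$) is the only point requiring care, and I expect it to be the main obstacle. Write $H^1(X,\C)=\C t\oplus V$, where $t$ is the class of the circle factor and $V=H^1(\Sigma_g,\C)$; let $\eta$ generate $H^2(\Sigma_g,\C)$, so that the cup product on $V$ is given by a symplectic form $\omega$ via $u\cup u'=\omega(u,u')\,\eta$. By the K\"unneth formula $H^2(X,\C)=\C\eta\oplus\bigl(\C t\otimes V\bigr)$, the map $u\mapsto t\cup u\in\C t\otimes V$ is injective, while products inside $V$ land in $\C\eta$. A Lagrangian subspace of $(V,\omega)$ is isotropic for $\cup_X$ and has dimension $g$, so $i\ge g$. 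Conversely, let $W$ be isotropic. If $W\subseteq V$, then $W$ is $\omega$-isotropic, so $\dim W\le g$. Otherwise $W$ contains a vector $w_0=t+u_0$; for every $u\in W\cap V$ we have $w_0\cup u=t\cup u+\omega(u_0,u)\,\eta$, and since its two K\"unneth components must vanish separately, injectivity forces $u=0$. Hence $W\cap V=0$, so $\dim W=1\le g$. Therefore $i=g$, and $\corank(G)=g$.

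Finally, the ``in particular'' statement follows by comparing first Betti numbers. Since $b_1$ is an invariant of the Malcev completion, and the three models have $b_1$ equal to $0$, $n$, and $2g+1$ respectively, an even value $b_1(G)=2k$ rules out the surface case. Hence $\m(G)=\m(F_{2k})$, and by the computation above $\corank(G)=2k=b_1(G)$.
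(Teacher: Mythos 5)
Your proposal is correct and follows essentially the same route as the paper: reduce to the three models via Theorem~\ref{thm:malclass}, convert corank to isotropy index via Theorem~\ref{thm:cutiso} and the Malcev-invariance of $i(G)$, and then compute $i$ for each model (this is exactly the paper's Lemma~\ref{lem:index}). Your K\"unneth-based analysis of isotropic subspaces of $H^1(S^1\times\Sigma_g,\C)$ simply makes explicit the ``standard argument'' the paper leaves to the reader, and it is sound.
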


\subsection{Quasi-homogeneous surface singularities}
\label{subsec:intro7}

Up to this point, we have only discussed quasi-K\"{a}hler, 
$3$-manifold groups which are $1$-formal.  But there are 
many groups of this sort which are not $1$-formal.   
A rich supply of such groups occurs in the context 
of isolated surface singularities with $\C^*$-action. 

Let $(X,0)$ be such a singularity, and let $M$ be the associated 
singularity link (a closed, oriented $3$-manifold).  Then clearly 
$G=\pi_1(M)$ is a quasi-projective, $3$-manifold group.  
Yet we show in Proposition \ref{prop:qh} that $G$ is not 
$1$-formal, provided $b_1(M)>0$.  

In the same vein, we construct in Proposition \ref{prop:affine} 
irreducible, smooth {\em affine}\/  surfaces $U$, with 
non-$1$-formal fundamental group $G$. To the best of 
our knowledge, these are the first examples of this kind. 
Note that the Deligne weight filtration subspace $W_1H^1(U, \C)$
must be non-trivial, since otherwise $G$ would be $1$-formal, 
according to Morgan's results from \cite{M}.

Computing the characteristic varieties of groups which 
are not $1$-formal can be a very arduous task.  Nevertheless, 
this can be done in our context:  in Proposition \ref{propN}, 
we give a precise description of all positive-dimensional 
irreducible components of $V^1_1(M)$, in the case when 
$M$ is the link of an isolated surface singularity with 
$\C^*$-action. In particular, we describe the analytic 
germs at the origin of those components, solely in 
terms of $b_1(M)$. Detailed computations are carried 
out in the case when $M=\Sigma(a_1,\dots,a_n)$ is 
a Brieskorn manifold. 

\subsection{Organization of the paper}
\label{intro:org}

In Section \ref{sect:malcev} we review some basic 
material on Malcev completions and $1$-formality, 
while in Section \ref{sect:cjl} we review cohomology 
jumping loci and the tangent cone theorem for $1$-formal groups. 
In Section \ref{sect:isotropic}, we discuss the characteristic 
and resonance varieties of quasi-K\"{a}hler manifolds and 
closed $3$-manifolds, with particular attention to their 
isotropicity properties. 

Section \ref{sect:qk3} is devoted to a proof of 
Theorem \ref{thm:malclass}, based on the different 
qualitative properties of resonance varieties in the 
quasi-K\"{a}hler and $3$-dimensional settings.  
We start Section \ref{sect:isocut} with a discussion 
of the corank and isotropy index; the rest of the 
section is devoted to proving Theorem \ref{thm:cutiso} 
and Corollary \ref{cor:cutiso}, and to an application. 

In Section \ref{sect:surf}, we study the formality 
properties of links of quasi-homogeneous surface singularities. 
And finally, in Section \ref{sect:trans}, we discuss the 
translated components in the characteristic varieties 
of these singularity links.  

\section{Malcev completion and $1$-formality}
\label{sect:malcev}

\subsection{Malcev completion}
\label{subsec:malcev lie}

Let $G$ be a group. The lower central series 
of $G$  is defined inductively by $\gamma_1 G=G$ 
and $\gamma_{k+1}G =[\gamma_k G,G]$, where 
$[x,y]=xyx^{-1}y^{-1}$. The {\em associated 
graded Lie algebra}, $\gr(G)$, is the direct sum 
of the successive LCS quotients, 
$\gr(G)= \bigoplus\nolimits_{k\ge 1} \gamma_k G/ \gamma_{k+1} G$, 
with Lie bracket induced from the group commutator; 
see \cite{MKS}. 

In \cite[Appendix A]{Qu}, Quillen associates to any group $G$ 
a pronilpotent, rational Lie algebra, $\m(G)$, called 
the {\em Malcev completion} of $G$.  The Malcev Lie algebra 
comes endowed with a decreasing, complete, $\Q$-vector 
space filtration, such that the associated graded Lie algebra, 
$\gr(\m(G))$, is isomorphic to $\gr(G)\otimes \Q$.  For precise 
definitions and basic properties of Malcev Lie algebras, we refer 
to \cite{PS-chenlie}. 

Now suppose $G$ is a finitely generated group.  We then 
have the following result of Sullivan \cite{Su77}. 

\begin{lemma}
\label{lem:malcev}
The Malcev completion $\m(G)$ determines the 
corestriction to its image of the cup-product map 
$\cup_G \colon H^1(G, \Q)\bigwedge H^1(G, \Q)\to H^2(G, \Q)$. 
\end{lemma}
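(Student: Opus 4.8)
The plan is to reconstruct the cup product from the degree-one and degree-two pieces of the associated graded Lie algebra $\gr(\m(G))$, both of which are manifestly determined by the Malcev completion, and then to recognize the bracket in this range as the transpose of $\cup_G$.

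First I would record what $\m(G)$ gives us for free. By construction, $\m(G)$ determines its associated graded Lie algebra $\gr(\m(G))\cong \gr(G)\otimes\Q$, together with its Lie bracket. In particular it determines the degree-one part $\gr_1=H_1(G,\Q)$---and hence, by duality for a finitely generated group, $H^1(G,\Q)=H_1(G,\Q)^{*}$---the degree-two part $\gr_2=(\gamma_2G/\gamma_3G)\otimes\Q$, and the bracket map $H_1(G,\Q)\wedge H_1(G,\Q)\surj \gr_2$.

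The main step is to identify the kernel of this bracket with the image of the dual cup product. For any finitely generated group there is a natural exact sequence
\[
H_2(G,\Q)\xrightarrow{\ \partial\ } H_1(G,\Q)\wedge H_1(G,\Q)\longrightarrow \gr_2\longrightarrow 0,
\]
in which the second arrow is the commutator pairing and $\partial$ is the transpose of $\cup_G$ under the identifications $H^{i}(G,\Q)=H_i(G,\Q)^{*}$ and $(H_1\wedge H_1)^{*}=H^1\wedge H^1$. I expect this identification of $\partial$ with the dual of $\cup_G$ to be the crux of the argument. One route is an explicit computation in the bar complex, comparing the commutator in $\gamma_2G/\gamma_3G$ with the standard low-degree cochain formula for the cup product; alternatively, it follows from Sullivan's $1$-minimal model, whose first stage is built on $H^1(G,\Q)$ with relations governed precisely by $\cup_G$ into $H^2(G,\Q)$. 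Phrased invariantly, this says that the holonomy Lie algebra $\h(G)$ agrees with $\gr(\m(G))$ in degrees $\le 2$, the degree-two relations being exactly $\im(\partial)$.

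Granting this, the conclusion is formal linear algebra. The kernel $K:=\im(\partial)\subseteq H_1\wedge H_1$ of the bracket is read off from $\gr(\m(G))$, hence is determined by $\m(G)$. Dualizing the displayed sequence yields the exact sequence $0\to \gr_2^{*}\to H^1\wedge H^1\xrightarrow{\ \cup_G\ } H^2$, so that $\ker(\cup_G)$ equals the annihilator $K^{\perp}\subseteq H^1\wedge H^1$; since $K$ is determined, so is $\ker(\cup_G)$. The corestriction of $\cup_G$ to its image is then the quotient map
\[
H^1(G,\Q)\wedge H^1(G,\Q)\surj \bigl(H^1\wedge H^1\bigr)/\ker(\cup_G)\isom \im(\cup_G),
\]
and is therefore determined by $\m(G)$, as claimed.
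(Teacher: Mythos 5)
The paper offers no proof of this lemma: it is stated as a result of Sullivan and attributed to \cite{Su77}, so there is nothing to compare your argument against line by line. That said, your reconstruction is correct and is essentially the standard derivation of Sullivan's observation. The crux you correctly isolate---the exact sequence $H_2(G,\Q)\xrightarrow{\partial} H_1(G,\Q)\wedge H_1(G,\Q)\to \gr_2(G)\otimes\Q\to 0$ with $\partial$ the transpose of $\cup_G$---follows from the five-term exact sequence for $1\to\gamma_2 G\to G\to G_{\ab}\to 1$ together with the identification $H_2(G_{\ab},\Q)\cong\bigwedge^2 H_1(G,\Q)$ and the naturality of the cup product on the torus $K(G_{\ab},1)$; this is exactly the content of the holonomy Lie algebra $\h(G)$ agreeing with $\gr(\m(G))$ in degrees $\le 2$, as you say. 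The only point worth flagging is that $H_2(G,\Q)$ need not be finite-dimensional for a finitely generated $G$, but since universal coefficients over a field still gives $H^2(G,\Q)\cong H_2(G,\Q)^{*}$, your dualization step and the identification $\ker(\cup_G)=K^{\perp}$ go through unchanged, and the corestriction of $\cup_G$ to its image is indeed recovered from $\m(G)$.
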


In particular, the first Betti number, $b_1(G)= \dim H^1(G, \Q)$, 
is determined by $\m(G)$. Note also that if $G=\pi_1(X)$, then 
$\cup_G$ and $\cup_X$ have the same corestriction.

We will also need the following fact, extracted from \cite{Su77}. 

\begin{lemma}
\label{lem:sullivan}
Let $\phi\colon G'\to G$ be a group homomorphism. If
$\phi^1\colon H^1(G, \Q)\to H^1(G', \Q)$ is an isomorphism, and 
$\phi^2\colon H^2(G, \Q)\to H^2(G', \Q)$ is a monomorphism, then 
$\phi$ induces an isomorphism $\phi_*\colon \m(G')\to \m(G)$ 
between Malcev completions. 
\end{lemma}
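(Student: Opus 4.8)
The plan is to translate the statement into the language of $1$-minimal models, where the two hypotheses become exactly the conditions that detect a $1$-minimal model. Recall that the Malcev Lie algebra $\m(G)$ is, up to isomorphism, the Lie-algebra dual of the $1$-minimal model $\mathcal{M}_G$ of the space $K(G,1)$, and that this correspondence is functorial: the homomorphism $\phi\colon G'\to G$ is realized by a map $f\colon K(G',1)\to K(G,1)$, unique up to homotopy, inducing a morphism of $1$-minimal models dual to $\phi_*\colon \m(G')\to \m(G)$. Since an isomorphism of $1$-minimal models corresponds to an isomorphism of Malcev Lie algebras, it suffices to show that $f$ induces an isomorphism $\mathcal{M}_G\isom \mathcal{M}_{G'}$.

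First I would fix a $1$-minimal model $\rho\colon \mathcal{M}_G\to A(K(G,1))$, where $A(-)$ denotes Sullivan's algebra of polynomial forms, and form the composite $\rho'=f^*\circ\rho\colon \mathcal{M}_G\to A(K(G',1))$. The key input is the standard characterization: a map from a minimal, degree-one-generated commutative differential graded algebra to $A(X)$ is a $1$-minimal model of $X$ precisely when it induces an isomorphism on $H^1$ and a monomorphism on $H^2$. I would check both conditions for $\rho'$. On $H^1$, the map $\rho'$ factors as $\rho$ followed by $f^*=\phi^1$; the former is an isomorphism because $\rho$ is a $1$-minimal model of $K(G,1)$, and the latter is an isomorphism by hypothesis. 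On $H^2$, the map factors through $H^2(\mathcal{M}_G)\to H^2(G)\xrightarrow{\,\phi^2\,} H^2(G')$; the first arrow is injective because $\rho$ is a $1$-minimal model, and $\phi^2$ is injective by hypothesis, so the composite is injective.

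Consequently $\rho'$ exhibits $\mathcal{M}_G$ as a $1$-minimal model of $K(G',1)$ as well. By the uniqueness of $1$-minimal models, the canonical model $\mathcal{M}_{G'}$ is isomorphic to $\mathcal{M}_G$, and this isomorphism is compatible with the maps into $A(K(G',1))$, hence realizes the map induced by $f$. Dualizing, $\phi_*\colon \m(G')\to \m(G)$ is an isomorphism. The conceptual content is that the monomorphism condition on $H^2$ is exactly the condition that no further Hirsch extensions — that is, no new generators — are needed to build a $1$-minimal model of $K(G',1)$ out of $\mathcal{M}_G$; once $H^1$ is matched and $H^2$ injects, the model already computes the full tower of nilpotent quotients. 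The main obstacle I anticipate is not any single computation but rather the careful invocation of the two structural facts underlying the argument, namely the cohomological characterization and uniqueness of $1$-minimal models, together with the precise dictionary between $1$-minimal models and Malcev Lie algebras, both of which are available in the references cited in \S\ref{subsec:malcev lie}.
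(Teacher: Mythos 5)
Your argument is correct, and it is essentially the standard one: the paper gives no proof of this lemma, citing it as a fact extracted from Sullivan \cite{Su77}, and what you have written out is precisely the argument underlying that citation --- the characterization of the $1$-minimal model by ``isomorphism on $H^1$, monomorphism on $H^2$,'' its uniqueness, and the duality with the Malcev Lie algebra. The two structural inputs you flag are exactly the ones Sullivan's paper (and, e.g., Deligne--Griffiths--Morgan--Sullivan) supplies, so nothing further is needed.
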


Suppose now $G=\pi_1(X)$ and $G'=\pi_1(X')$. Let $f\colon X'\to X$ be a 
continuous map, and set $\phi= f_{\sharp}\colon G'\to G$. Recall that a 
classifying map, $\kappa\colon X\to K(G, 1)$, induces an isomorphism 
on $H^1$ and a monomorphism on $H^2$. Hence, $\phi$ 
satisfies the cohomological hypotheses of Lemma \ref{lem:sullivan} 
whenever $f^*\colon H^*(X, \Q)\to H^*(X', \Q)$ satisfies them.

\subsection{$1$-formality}
\label{subsec:1formal}
As before, let $X$ be a connected CW-complex, with finite 
$1$-skeleton.  Following  K.-T. Chen \cite{Ch}, define the
{\em (rational) holonomy Lie algebra} of $X$ as 
\begin{equation}
\label{eq:hlie}
\h (X) = \Lie(H_1(X, \Q))/\, \text{ideal} (\im (\partial_X)),
\end{equation}
where $\Lie(H_1(X, \Q))$ is the free (graded) Lie algebra over $\Q$, 
generated in degree $1$ by $H_1(X; \Q)$, and $\partial_X\colon 
H_2(X, \Q)\to  H_1(X, \Q) \wedge H_1(X, \Q)$ is
the dual of the cup-product map, $\cup_X$.  Since 
its defining ideal is homogeneous (in fact, quadratic), 
$\h (X)$ inherits a natural grading from the free Lie algebra.   
Note that $\h(X)$ depends only on the corestriction 
of $\cup_X $ to its image.  It follows that $\h(X) =\h(\pi_1(X))$, 
where $\h(G):=\h(K(G,1))$. 

Now let $G$ be a finitely generated group, and 
let  $\wh(G)$ be the completion of $\h(G)$ with respect 
to the degree filtration. Following Sullivan \cite{Su77}, 
we say $G$ is {\em $1$-formal}\/ if $\m(G)\cong \wh(G)$, 
as (complete) filtered Lie algebras.  Equivalently, $\m(G)$ 
is filtered Lie isomorphic to the degree completion of a 
quadratic Lie algebra.  

It follows from the definitions that the Malcev completion 
of a $1$-formal group $G$ is determined by the corestriction 
of $\cup_G $ to its image; or, the corestriction of $\cup_X$ 
to its image, if $G=\pi_1(X)$, where $X$ is a CW-complex 
with finite $1$-skeleton. 

\begin{example}
\label{ex:free}
Let $F_n$ be the free group of rank $n$.  Then, as shown 
by Witt and Magnus, $\gr(F_n)\otimes \Q=\LL_n$, the free 
Lie algebra of rank $n$.  It is readily checked that $\h(F_n)=\LL_n$, 
and $\m(F_n)=\widehat{\LL}_n$.  In particular, $F_n$  
is $1$-formal. 
\end{example}

\begin{example}
\label{ex:surface}
Let $\Sigma_g$ be the Riemann surface of genus $g\ge 1$, 
with fundamental group $\Pi_g=\pi_1(\Sigma_g)$ generated 
by $x_1,y_1,\dots,x_g,y_g$, subject to the relation 
$[x_1,y_1]\cdots [x_g,y_g]=1$. It follows from \cite{DGMS} that 
the group $\Pi_g$  is $1$-formal. Using \eqref{eq:hlie}, 
it is readily checked that $\h(\Pi_g)$ 
is the quotient of the free Lie algebra on $x_1,y_1,\dots,x_g,y_g$  
by the ideal generated by $[x_1,y_1]+ \cdots + [x_g,y_g]$. 
\end{example}

\section{Cohomology jumping loci}
\label{sect:cjl}

\subsection{Characteristic varieties}
\label{subs:cvs}
Let $X$ be a connected CW-complex with finitely many cells 
in each dimension. Let $G=\pi_1(X)$ be the fundamental group 
of $X$, and $\T(G) =\Hom (G, \C^*)$ its character variety. 
Every character $\rho\in \T(G)$ determines a rank $1$ local 
system, $\C_{\rho}$, on $X$.   The {\em characteristic varieties}\/ 
of $X$ are the jumping loci for cohomology with coefficients in 
such local systems:
\begin{equation}
\label{eq:cv}
V^i_d(X)=\{\rho \in \T(G)
\mid \dim H^i(X,\C_{\rho})\ge d\}. 
\end{equation}

The varieties $V_d(X)=V^1_d(X)$ depend only on 
$G=\pi_1(X)$, so we sometimes denote them as $V_d(G)$.
Here is an alternate description, along the lines of \cite{DPS07}.  

Let $1$ be the identity of the algebraic group 
$\T(G)$, and let $\T^0(G)$ be the connected component 
containing $1$.  Set $n=b_1(G)$, and let $G_{\abf}=\Z^n$ 
be the maximal torsion-free abelian quotient of $G$. 
The group algebra $\C{G}_{\abf}$ may be identified with 
$\Lambda_n=\C [t_1^{\pm 1},\dots ,t_n^{\pm 1}]$, 
the coordinate ring of the algebraic torus $\T^0(G)=(\C^*)^n$. 
Finally, set $A_G=\C{G_{\abf}} \otimes_{\C{G}} I_G$,  
where $\C{G}$ is the group algebra of $G$, with augmentation 
ideal $I_G$.  Then, for all $d< n$ (and away from $1$ if $d\ge n$), 
\begin{equation}
\label{eq:vdg}
V_d(G) \cap \T^0(G)=V(E_d (A_G)),
\end{equation}
the subvariety of $\T^0(G)$ defined by $E_d (A_G)$, the 
ideal of codimension $d$ minors of a presentation matrix 
for $A_G$;  see \cite[Proposition 2.4]{DPS07}.  Such a 
matrix can be computed explicitly from a finite presentation 
for $G$, by means of the Fox calculus.  

In favorable situations, the intersection of $V_1(G)$  with the 
torus $ \T^0(G)$ is a hypersurface, defined by the vanishing 
of the {\em  Alexander polynomial}, $\Delta^G$.   More precisely, 
let $\Delta^G\in \Lambda_n$  be the greatest common divisor 
of all elements of $E_1(A_G)$, up to units in $\Lambda_n$. 
Suppose the Alexander ideal $E_1(A_G)$ is {\em almost principal}, 
i.e., there is an integer $k>0$ such that 
$I_G^k \cdot (\Delta^G) \subset E_1(A_G)$.  
We then have $V_1(G) \cap \T^0(G)=V(\Delta^G)$, at least away 
from $1$.  For more details, see \cite[\S6.1]{DPS07}.

\subsection{Resonance varieties}
\label{subs:rvs}
Consider now the cohomology algebra $H^* (X,\C)$.  
Left-mul\-tiplication by an element $x\in H=H^1(X,\C)$ 
yields a cochain complex $(H^*(X,\C), \lambda_x )$.  
The {\em resonance varieties}\/ of $X$ are the jumping 
loci for the homology of this complex:
\begin{equation}
\label{eq:rv}
R^i_d(X)=\{x \in H \mid 
\dim H^i(H^*(X,\C),\lambda_x ) \ge  d\}.
\end{equation}

The homogeneous varieties $R_d(X)=R^1_d(X)$ depend 
only on $G=\pi_1(X)$, so we sometimes denote them by 
$R_d(G)$.  In view of Lemma \ref{lem:malcev}, the varieties 
$R_d(G)$ depend only on the Malcev completion, $\m(G)$. 

\subsection{The tangent cone theorem}
\label{subs:tct}
Identify $H^1(X,\C)$ with $T_1 \T(G)= \Hom (G, \C)$,  
the Lie algebra of the algebraic group $\T(G)$, 
and let $\exp \colon T_1 \T(G) \to \T(G)$ be the
exponential map. Under this identification, 
the tangent cone to $V^i_d(X)$ at the origin, 
$TC_{1}(V^i_d(X))$, is contained in $R^i_d(X)$, 
see Libgober \cite{Li}.   In general, though, this 
inclusion is strict. 

The main link between the $1$-formality property of a group 
and its cohomology jumping loci is provided by the following 
theorem. 

\begin{theorem}[Dimca--Papadima--Suciu \cite{DPS-jump}, Theorem A] 
\label{thm:tcone}
Suppose $G$ is a $1$-formal group.  Then, for each $d\ge 1$, 
the exponential map induces a complex analytic isomorphism 
between the germ at $0$ of $R_d(G)$ and the germ at $1$ 
of $V_d(G)$. Consequently,  $TC_{1}(V_d(G))=R_d(G)$. 
\end{theorem}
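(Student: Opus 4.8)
The plan is to recover both germs from a finite-dimensional commutative differential graded algebra (CDGA) model of $X$ and to compare the two deformation problems they encode. First I would fix a finite-type CDGA $(A,d)$ with $A^0=\C$ and $A^1=H^1(X,\C)$ whose cohomology agrees with $H^*(X,\C)$ through degree $2$ and whose multiplication $A^1\wedge A^1\to A^2$ is dual to $\partial_X$; such a \emph{$1$-model} is supplied by Sullivan's theory \cite{Su77} and, together with $d$, controls $\m(G)$. For a flat connection $\omega\in A^1$, i.e. a Maurer--Cartan element satisfying $d\omega+\omega\wedge\omega=0$, one forms the deformed complex $(A,d+\mu_\omega)$, where $\mu_\omega$ is left multiplication by $\omega$. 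The first, and hardest, step is to show that the exponential map $\exp\colon H^1(X,\C)\to\T(G)$ identifies the analytic germ at $1$ of $V_d(G)$ with the germ at $0$ of the jump locus $\{\omega\mid\dim H^1(A,d+\mu_\omega)\ge d\}$. This is a Goldman--Millson-type assertion: near the trivial character, the character variety together with its twisted-cohomology stratification is computed by the flat-connection deformation functor of any finite $1$-model, and this computation is invariant under quasi-isomorphism of models. On the resonance side the parallel statement is elementary, namely that $R_d(G)$ is the jump locus of the \emph{linearized} (Aomoto) complex $(H^*(X,\C),\lambda_x)$.

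Next I would invoke $1$-formality. By hypothesis $\m(G)\cong\wh(G)$, so $X$ admits a finite $1$-model with \emph{zero} differential; concretely one may take $(A,0)$ with $A^1=H^1(X,\C)$ and $A^2=\im(\cup_X)$. With $d=0$ the Maurer--Cartan equation $d\omega+\omega\wedge\omega=0$ is satisfied automatically by every $\omega\in A^1$ (since $\omega\wedge\omega=0$ for a degree-one element in characteristic zero), so the space of flat connections near $0$ is all of $H^1(X,\C)$, and the deformed complex becomes $(A,\mu_\omega)$, which in low degrees is the Aomoto complex $\C\xrightarrow{\,\cdot\,\omega}H^1(X,\C)\xrightarrow{\,\cup\,\omega}A^2$. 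Hence the jump locus of the model is literally $R_d(G)$. Combining this with the first step, the germ at $1$ of $V_d(G)$ is, through $\exp$, analytically isomorphic to the germ at $0$ of $R_d(G)$, which is the main assertion.

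The consequence is then formal. Because the chosen model has zero differential and is graded by cohomological degree, its jump locus is a homogeneous subvariety of $H^1(X,\C)$; that is, $R_d(G)$ is a cone and equals its own tangent cone at $0$. Since the differential of $\exp$ at $0$ is the identity, transporting this cone through the isomorphism of germs identifies the tangent cone to $V_d(G)$ at $1$ with $R_d(G)$, giving $TC_1(V_d(G))=R_d(G)$. (The inclusion $TC_1(V_d(G))\subseteq R_d(G)$ always holds, by Libgober \cite{Li}; what $1$-formality supplies is the reverse inclusion and the promotion to an isomorphism of analytic germs.)

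The hard part is the first step: proving that the germ of the rank-one character variety, together with its twisted-cohomology stratification, is genuinely captured by a finite CDGA $1$-model and that this description is a quasi-isomorphism invariant. This requires the full deformation-theoretic dictionary between flat connections on the model and rank-one local systems on $X$, with careful control of the higher-order obstructions to integrating an infinitesimal (resonance) solution into an honest (characteristic) one. It is precisely here that $1$-formality is indispensable: passing to the quadratic model $(A,0)$ annihilates these obstructions, whereas for a non-$1$-formal group the higher Massey-type corrections survive, the germs need not agree, and only Libgober's inclusion remains.
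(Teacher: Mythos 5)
You should first note that this paper does not prove Theorem \ref{thm:tcone} at all: it is imported verbatim as Theorem A of \cite{DPS-jump}, so there is no ``paper's own proof'' here to match step for step. Judged on its own terms, your proposal has a genuine gap, and it sits exactly where you admit the difficulty lies. Your Step 1 --- that the exponential map identifies the germ of $\T(G)$ at $1$, \emph{together with its twisted-cohomology jump stratification}, with the Maurer--Cartan/jump locus of an arbitrary finite $1$-model, and that this is a quasi-isomorphism invariant --- is not a lemma you can cite and is essentially the entire content of the theorem. Goldman--Millson theory, as it stood, identifies the germ of the representation variety with the deformation functor of a model; it does not by itself control the loci where $\dim H^1$ of the deformed complex jumps, nor does it show that a $1$-model (as opposed to a full model) suffices for the degree-one loci. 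Making this precise is a substantial theorem in its own right (it is the content of later work of Dimca--Papadima and of Budur--Wang on cohomology jump loci via deformation theory), so as written your argument reduces the statement to an unproved assertion of comparable depth. The remaining steps (that $1$-formality yields a $1$-model with zero differential, that the Maurer--Cartan equation then trivializes so the model's jump locus is literally $R_d(G)$, and that homogeneity of $R_d(G)$ plus $d_0\exp=\mathrm{id}$ gives $TC_1(V_d(G))=R_d(G)$) are fine, including the observation that replacing $H^2$ by $\im(\cup_X)$ does not change the degree-one jump loci.

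For comparison, the actual proof in \cite{DPS-jump} does not go through Goldman--Millson deformation theory. It works with the Alexander invariant of $G$ and the ``infinitesimal Alexander invariant'' of the holonomy Lie algebra $\h(G)$: the germ of $V_d(G)$ at $1$ is read off from the $I$-adic completion of the former, the germ of $R_d(G)$ at $0$ from the latter, and $1$-formality supplies a filtered isomorphism between the two completions, from which the analytic isomorphism of germs follows. If you want to salvage your route, you must either prove the rank-one, degree-one case of the deformation-theoretic dictionary directly (with control of the obstruction maps and of invariance under $1$-quasi-isomorphism), or switch to the Alexander-invariant argument.
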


\section{Isotropicity properties}
\label{sect:isotropic}

\subsection{Isotropic subspaces}
\label{subsec:isotropy}
Let $G$ be a finitely generated group. 
We say that a non-zero subspace $W\subseteq  H^1(G,\C)$ 
is {\em $p$-isotropic} ($p=0$ or $1$) with respect to the cup-product 
map $\cup_G\colon  \bigwedge^2 H^1(G,\C)\to H^2(G,\C)$ if the 
restriction of $\cup_G$ to $\bigwedge^2 W$ has rank $p$.  

The motivation for this definition comes from the case when 
$G=\pi_1(C)$ is the fundamental group of a non-simply-connected 
smooth complex curve $C$.  Set $W= H^1(G,\C)$. 
If $C$ is non-compact, then 
$G=F_n$, for some $n>0$, and so $W$ is $0$-isotropic.  
On the other hand, if  $C$ is compact, then 
$G=\Pi_g$, for some $g>0$, and so $W$ is $1$-isotropic. 

Note that the $p$-isotropicity property depends only on the 
Malcev completion, by Lemma \ref{lem:malcev}.

\subsection{Characteristic varieties of quasi-K\"{a}hler manifolds}
\label{subsec:cvqk}
Let $X$ be a quasi-K\"{a}hler manifold, with fundamental 
group $G=\pi_1(X)$.  As mentioned in \S\ref{subsec:intro4}, 
it is known that $V_1(G)$ is a union of (possibly translated) 
subtori of $\T(G)=\Hom (G ,\C^*)$. 
Let us describe in more detail those components passing 
through the identity $1\in \T(G)$. 

Following Arapura \cite[p.~590]{Ar}, we say that a map
$f\colon X \to C$ to a connected, smooth complex curve 
$C$ is {\em admissible}\/ if $f$ is holomorphic and surjective, 
and has a holomorphic, surjective extension with connected 
fibers to smooth compactifications, 
$\overline{f}\colon \overline{X} \to \overline{C}$, 
obtained by adding divisors with normal crossings. 
In particular, the generic fiber of $f$ is connected, and the induced 
homomorphism, $f_{\sharp}\colon \pi_1(X)\to \pi_1(C)$, is onto. 
Two such maps, $f\colon X \to C$ and $f'\colon X\to C'$, 
are said to be equivalent if there is an isomorphism 
$\psi\colon C \to C'$ such that $f'=\psi\circ f$.  
Proposition V.1.7 from \cite{Ar} can now be stated, as follows.
  
\begin{theorem}[Arapura \cite{Ar}]
\label{thm:arapura}
There is a bijection between the set of positive-dimensional irreducible 
components of $V_1(G)$ containing $1$, and the set of equivalence 
classes of admissible maps $f\colon X\to C$ with $\chi(C)<0$. 
This bijection associates to $f$ the subtorus 
$S_f= f^*\T(\pi_1(C))\subseteq \T^0(G)$. 
\end{theorem}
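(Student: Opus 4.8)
The plan is to establish the asserted bijection by constructing the two correspondences separately and checking that they are mutually inverse. In one direction, to an admissible map $f\colon X\to C$ with $\chi(C)<0$ I attach the subtorus $S_f:=f^*\T(\pi_1(C))\subseteq\T^0(G)$; in the other, to a positive-dimensional component of $V_1(G)$ through $1$ I attach an admissible map to a curve. The first direction is the more formal one, and I would dispatch it by a pullback argument; the construction of the map from a component is where the real content lies, and that is the step I expect to be the main obstacle.

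For the forward direction, fix an admissible $f\colon X\to C$ with $\chi(C)<0$. Since $f_{\sharp}\colon\pi_1(X)\surj\pi_1(C)$ is onto, the induced morphism $f^*\colon\T(\pi_1(C))\inj\T(G)$ is a closed embedding, so $S_f$ is a subtorus of positive dimension $b_1(C)>0$. I would then show $S_f\subseteq V_1(G)$ as follows: for $\rho'\in\T(\pi_1(C))$ and $\rho=f^*\rho'$, the hypothesis $\chi(C)<0$ forces $H^1(C,\C_{\rho'})\neq 0$ for \emph{every} $\rho'$, since for a nontrivial rank-one system on the open curve one has $H^0=H^2=0$ and hence $\dim H^1(C,\C_{\rho'})=-\chi(C)>0$ by the Euler characteristic. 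Because $f$ has connected generic fibre and surjects on $\pi_1$, the map $f^*\colon H^1(C,\C_{\rho'})\to H^1(X,\C_{\rho})$ is injective, so $\dim H^1(X,\C_\rho)\geq 1$ for all $\rho\in S_f$. Maximality of $S_f$ as an irreducible component then follows from the structural fact, recalled in \S\ref{subsec:intro4}, that $V_1(G)$ is a finite union of (possibly translated) subtori.

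The hard direction — recovering an admissible map from a positive-dimensional component $W\ni 1$ — I would attack through Hodge theory together with a Castelnuovo--de Franchis argument. Using Libgober's inclusion $TC_1(W)\subseteq R_1(G)$ recalled in \S\ref{subs:tct}, positive-dimensionality of $W$ yields a positive-dimensional subspace of the resonance variety, and, invoking the isotropicity properties of $R_1(G)$ for quasi-K\"{a}hler groups, an isotropic subspace $U\subseteq H^1(X,\C)$ with $\dim U\geq 2$ on which $\cup_X$ vanishes. From the mixed Hodge structure on $H^1(X,\C)$ I would then extract from $U$ two $\C$-linearly independent logarithmic $1$-forms $\omega_1,\omega_2$ on a good compactification $\overline{X}$ satisfying $\omega_1\wedge\omega_2=0$. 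The generalized Castelnuovo--de Franchis theorem for logarithmic forms produces a holomorphic surjection onto a curve, which after normalizing the compactification data yields an admissible $f\colon X\to C$: the vanishing $\omega_i\wedge\omega_j=0$ is precisely what forces the forms to descend from a one-dimensional base, and the bound $\dim U\geq 2$ forces $\chi(C)<0$.

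Finally I would verify that the two constructions are mutually inverse up to the stated equivalence of maps: distinct fibrations yield distinct subtori, since $S_f$ recovers $f^*H^1(C)$ and hence $f$ up to an isomorphism of the base, while conversely the component determines its fibration. I would also confirm that $\chi(C)<0$ is forced on both sides: it is exactly the condition guaranteeing $H^1(C,\C_{\rho'})\neq 0$ for all $\rho'$, hence that all of $S_f$ — and not merely $1$ — lies in $V_1(G)$; maps with $\chi(C)=0$, such as $C=\C^*$ whose twisted cohomology vanishes away from the trivial character, contribute only the point $1$ and are rightly excluded. The single most delicate point throughout is the Castelnuovo--de Franchis step in the open case: controlling the logarithmic forms and ensuring the resulting map extends admissibly to the compactifications with connected fibres.
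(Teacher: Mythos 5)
The first thing to note is that the paper offers no proof of this statement: it is quoted as Proposition V.1.7 of Arapura \cite{Ar}, so there is nothing internal to compare your argument against. Judged on its own terms, your forward direction is essentially the standard one and is sound in outline: for $\chi(C)<0$ one has $H^1(C,\C_{\rho'})\ne 0$ for \emph{every} character $\rho'$ (this is exactly where $\chi(C)<0$ enters, and your remark that curves with $\chi(C)=0$ contribute only isolated points is the right way to see why they are excluded), and injectivity of $f^*$ on twisted $H^1$ follows from surjectivity of $f_{\sharp}$ with connected generic fibre via the Leray spectral sequence. You do leave the maximality of $S_f$ as a component, and the injectivity of $f\mapsto S_f$ on equivalence classes, at the level of assertion; both require the factorization argument (two admissible maps inducing the same subtorus factor through a common curve), but these are comparatively minor.

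The serious problem is the backward direction, which as written is circular. The isotropicity properties of $R_1(G)$ for quasi-K\"{a}hler groups that you invoke are Theorem \ref{thm:posobstr} of this paper, and that theorem is \emph{deduced from} Arapura's result (together with the tangent cone theorem, Theorem \ref{thm:tcone}, which moreover requires a $1$-formality hypothesis absent from the statement you are proving). So it cannot be used here. Concretely, two things are unproved in your sketch: (i) that the tangent cone at $1$ of a positive-dimensional component $W\ni 1$ contains a linear subspace of dimension at least $2$ --- a priori $W$ could be a one-dimensional subtorus, whose tangent line is trivially isotropic and produces no fibration; and (ii) that the cup product actually vanishes on the subspace you extract --- Libgober's inclusion $TC_1(W)\subseteq R_1(G)$ says only that each $x\in TC_1(W)$ has nontrivial $\lambda_x$-cohomology, not that $TC_1(W)$ is isotropic for $\cup_X$. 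Both points are genuine content of Arapura's proof (and of the Beauville--Catanese--Bauer isotropic subspace theorems in the compact and open cases), which proceeds by Hodge-theoretic arguments on the logarithmic forms representing classes along the component rather than by quoting resonance isotropicity. The Castelnuovo--de Franchis endgame you describe is the right tool once a genuinely isotropic subspace of logarithmic $1$-forms of dimension $\ge 2$ is in hand, but producing it is the heart of the matter and is missing.
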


\subsection{Resonance varieties of quasi-K\"{a}hler manifolds}
\label{subsec:res qk}
As above, let $X$ be a quasi-K\"{a}hler manifold. 
Let us assume now that $G=\pi_1(X)$ is $1$-formal.  
Using the aforementioned results of Arapura, in 
conjunction with Theorem \ref{thm:tcone}, we establish 
in \cite[Theorem C]{DPS-jump} a bijection between 
the positive-dimensional irreducible components of 
$R_1(G)$ and admissible maps $f\colon X\to C$ 
with $\chi(C)<0$.  Under this bijection, an 
admissible map $f$ corresponds to the linear subspace 
$W_f= f^*H^1(C, \C)=T_1(S_f)\subseteq H^1(G, \C)$.
As explained in detail in \cite{Di08}, this correspondence 
is closely related to the results of Catanese \cite{Ca} 
and Bauer \cite{Ba} on isotropic subspaces. 

With this notation, Proposition 7.2(3) from \cite{DPS-jump} 
can be reformulated in a more precise fashion, as follows.  

\begin{theorem}[Dimca--Papadima--Suciu \cite{DPS-jump}]
\label{thm:posobstr} 
Let $X$ be  a quasi-K\"{a}hler manifold, with $1$-formal fundamental 
group $G$.  Suppose $f\colon X\to C$ is an admissible map onto 
a smooth curve $C$ with $\chi(C)\le 0$.  There are then % exactly 
two possibilities.
\begin{enumerate}
\item  \label{a1}  
$W_f$ is a $0$-isotropic subspace, which happens 
precisely when $C$ is non-compact. 

\item  \label{a2} 
$W_f$ is a $1$-isotropic subspace, which happens 
precisely when $C$ is compact.
\end{enumerate}
\end{theorem}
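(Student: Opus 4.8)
The plan is to prove Theorem \ref{thm:posobstr} by combining the structural correspondence between admissible maps and resonance components with a direct analysis of the cup-product structure that $f$ pulls back from the curve $C$. The key observation is that $W_f = f^* H^1(C,\C)$, so the restriction of $\cup_G$ to $\bigwedge^2 W_f$ is governed by the cup-product on $H^1(C,\C)$, pulled back via the ring homomorphism $f^*\colon H^*(C,\C)\to H^*(G,\C)$. Thus the problem reduces to understanding the cup-product in degree one on a smooth curve, together with showing that $f^*$ is injective on the relevant cohomology so that isotropicity is faithfully transported.

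First I would record that, since $f$ is admissible with connected generic fiber, the induced map $f_\sharp\colon G\to \pi_1(C)$ is surjective; hence $f^*\colon H^1(C,\C)\to H^1(G,\C)$ is injective, and $W_f\cong H^1(C,\C)$ as vector spaces. Next I would split into the two cases according to the topology of $C$. If $C$ is non-compact, then $\pi_1(C)=F_n$ is free (with $n>0$ forced by $\chi(C)<0$), so $H^2(C,\C)=0$ and the cup-product on $H^1(C,\C)$ vanishes identically. Pulling back through the ring map $f^*$, the restriction of $\cup_G$ to $\bigwedge^2 W_f$ factors through $f^*$ applied to $H^2(C,\C)=0$, so it is zero; that is, $W_f$ is $0$-isotropic. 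If $C$ is compact, then $\pi_1(C)=\Pi_g$ is a surface group with $g\ge 1$ (again $g\ge 1$ by $\chi(C)\le 0$ together with positive-dimensionality of the component), $H^2(C,\C)\cong\C$, and Poincar\'e duality makes the cup-product map $\bigwedge^2 H^1(C,\C)\to H^2(C,\C)$ a nonzero (in fact perfect symplectic) pairing; hence it has rank $1$ as a linear map to a $1$-dimensional target, and so does its pullback on $\bigwedge^2 W_f$, giving the $1$-isotropic conclusion.

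The step requiring the most care is the assertion that the rank of $\cup_G$ restricted to $\bigwedge^2 W_f$ agrees with the rank of $\cup_C$ on $\bigwedge^2 H^1(C,\C)$, rather than merely being bounded by it. Because $f^*$ is a ring homomorphism, we have $f^*(a)\cup f^*(b)=f^*(a\cup b)$, which immediately gives surjectivity of the restricted cup-product onto $f^*(\im\cup_C)$; what must be checked is that $f^*$ does not collapse this image, i.e.\ that $f^*\colon \im(\cup_C)\to H^2(G,\C)$ is injective in the compact case. This is where I expect the main obstacle to lie, and it is precisely the content for which admissibility and the compactification data are needed: the extension $\overline f\colon\overline X\to\overline C$ together with the fact that $\overline C$ is a compact Riemann surface ensures the fundamental class of $C$ pulls back nontrivially, so $f^*$ is injective on $H^2(C,\C)$. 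The equivalences with compactness (the ``precisely when'' clauses) then follow by matching each case to the dichotomy free-versus-surface for $\pi_1(C)$, which is governed entirely by whether $C$ is open or closed.

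I would remark that $1$-formality of $G$ enters only through Theorem \ref{thm:tcone} and the cited correspondence from \cite{DPS-jump}, which guarantee that the subspaces $W_f$ are genuine components of $R_1(G)$ and that the relevant cup-product data is the honest corestriction of $\cup_G$; the isotropicity computation itself is then purely a statement about the pulled-back cohomology ring of the curve, and by the remark in \S\ref{subsec:isotropy} it depends only on $\m(G)$, consistently with Lemma \ref{lem:malcev}.
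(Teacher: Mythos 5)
Your non-compact case is fine, but the compact case contains a genuine gap, and the paper itself supplies a counterexample to the mechanism you propose. You correctly isolate the crux---that $f^*\colon H^2(C,\C)\to H^2(X,\C)$ must be injective when $C$ is compact---but your justification, namely that ``admissibility and the compactification data'' force the fundamental class of $C$ to pull back nontrivially, is false. Take $X^*=X\setminus\{0\}$ for a quasi-homogeneous isolated surface singularity with $b_1>0$, i.e., the total space of a $\C^*$-bundle with nonzero Euler number over a compact curve $C$ of genus $g\ge 1$. The projection $p\colon X^*\to C$ is admissible onto a compact curve (as verified in the proof of Proposition \ref{prop:qh}), yet the cup product vanishes identically on $H^1(X^*,\C)=p^*H^1(C,\C)$ by Sullivan's result (Remark \ref{rem:sullivan}); concretely, the Gysin sequence shows that $p^*\colon H^2(C,\C)\to H^2(X^*,\C)$ is the zero map whenever the Euler class is nonzero. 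So $W_p$ is $0$-isotropic even though $C$ is compact. The only hypothesis that fails here is $1$-formality of $\pi_1(X^*)$---indeed, the proof of Proposition \ref{prop:qh}\eqref{s2} is precisely the contrapositive of the statement you are trying to prove. Since your argument uses $1$-formality only to know that $W_f$ is a component of $R_1(G)$, and you explicitly declare the isotropicity computation to be ``purely a statement about the pulled-back cohomology ring of the curve,'' it would apply verbatim to this example and prove something false.

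What is needed is an argument in which $1$-formality enters the rank computation itself; note that the present paper does not reprove the theorem (it is imported from \cite{DPS-jump}, Proposition 7.2(3)), so the burden of the compact case rests entirely on your argument. One standard route: for generic $\rho\in S_f=f^*\T(\pi_1(C))$ one computes geometrically that $\dim H^1(X,\C_\rho)=\dim H^1(C,L)=2g-2$, so generic points of $S_f$ lie in $V_{2g-2}(X)$ but not in $V_{2g-1}(X)$; the tangent cone theorem in depth $d$ (not just $d=1$) transfers this to the statement that a generic $x\in W_f$ lies in $R_{2g-2}(G)$ but not in $R_{2g-1}(G)$. If $\cup_G$ vanished identically on $\bigwedge^2 W_f$, then $\ker\lambda_x\supseteq W_f$ would place every such $x$ in $R_{2g-1}(G)$, a contradiction; hence the restriction has rank exactly $1$. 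Some argument of this kind, making essential use of the $1$-formal identification of germs, must replace your appeal to admissibility before the compact case can be considered closed.
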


\subsection{Jumping loci of $3$-manifolds}
\label{subsec:res3}

Let $M$ be a compact, connected, orientable $3$-manifold, 
with fundamental group $\pi$, and Alexander polynomial 
$\Delta^{\pi}$.  As shown by McMullen in \cite[Theorem 5.1]{McM}, 
$I^2_{\pi}\cdot (\Delta^{\pi})\subset E_1(A_{\pi})$, and so 
the Alexander ideal $E_1(A_{\pi})$ is almost principal. 
From the discussion at the end of \S\ref{subs:cvs}, we have  
\begin{equation}
\label{eq:v1 3m}
V_1(M)\cap \T^0(\pi)= V(\Delta^{\pi}), 
\end{equation}
at least away from $1$. 

Now fix an orientation on $M$; that is to say, pick a generator 
$[M]\in H_3(M,\Z)\cong\Z$. With this choice, the cup 
product on $M$ and Kronecker pairing determine an 
alternating $3$-form $\mu_M$ on $H^1(M,\Z)$, given by 
$\mu_M(x,y,z) = \langle x\cup y\cup z , [M]\rangle$. 

In coordinates, we can write this form as follows. 
Choose a basis $\{e_1,\dots ,e_n\}$ for $H_1(M,\C)$. 
Let $\{e^*_1,\dots ,e^*_n\}$ be the Kronecker dual basis 
for $H^1(M,\C)$, and let $\{e^{\vee}_1,\dots ,e^{\vee}_n\}$ be 
the Poincar\'{e} dual basis for $H^2(M,\C)$, defined by 
$\langle e^{\vee}_i \cup e^*_j, [M] \rangle= \delta_{ij}$. 
Set $\mu_{ijk}= \langle e^*_i \cup e^*_j \cup e^*_k, [M] \rangle$. 
The $3$-form $\eta=\mu_M \otimes \C$ is then given by 
\begin{equation}
\label{eq:eta}
\eta= \sum_{i,j,k} \mu_{ijk} e_i \wedge e_j \wedge e_k.
\end{equation}

\begin{remark}
\label{rem:sullivan}
In \cite{Su75}, Sullivan showed that {\em any}\/ alternating 
$3$-form $\eta\in \bigwedge^3 (\Q^n)^*$ can be realized as 
$\eta=\mu_M \otimes \Q$, for some closed, connected, 
oriented $3$-manifold $M$ with $b_1(M)=n$.  He also 
showed that, if $X$ is a complex algebraic surface with 
an isolated singularity, and $M$ is the link of that singularity, 
then $\mu_M=0$. 
\end{remark}

We will need the following result from \cite{DS07}, 
which provides information on the first 
resonance variety of a $3$-manifold, and its 
isotropicity properties.  

\begin{theorem}[Dimca--Suciu \cite{DS07}]  %Proposition 5.1
\label{thm:res3}
Let $M$ be a closed, orientable $3$-manifold.  
Then:
\begin{enumerate}
\item  \label{r1}
$H^1(M,\C)$ is not $1$-isotropic. 
\item \label{r2}
If $b_1(M)$ is even, then $R_1(M)=H^1(M,\C)$.
\item \label{r3}
If $R_1(M)$ contains a $0$-isotropic hyperplane, then 
$R_1(M)= H^1(M, \C)$.
\end{enumerate}
\end{theorem}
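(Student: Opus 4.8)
The plan is to translate all three assertions into statements about the single alternating trilinear form $\mu(x,y,z)=\langle x\cup y\cup z,[M]\rangle$ on $H=H^1(M,\C)$, using Poincar\'{e} duality to make the passage between cohomology and this form transparent. Set $n=b_1(M)$. Since $H^3(M,\C)\cong\C$ and the cup product $H^2(M,\C)\times H^1(M,\C)\to H^3(M,\C)$ is a perfect pairing, I identify $H^2(M,\C)$ with the dual $H^{*}$. Under this identification the multiplication map $\lambda_x\colon H\to H^2(M,\C)$ becomes the skew-symmetric bilinear form $\iota_x\mu=\mu(x,\cdot,\cdot)$ on $H$, and the cup-product map $\cup\colon\bigwedge^2 H\to H^2(M,\C)$ becomes the linear map $\bigwedge^2 H\to H^{*}$ induced by $\mu$. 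In particular $\rank(\lambda_x)=\rank(\iota_x\mu)$, while $\dim\im(\cup)$ equals the rank of $\mu$, i.e.\ the smallest dimension of a subspace $W\subseteq H^{*}$ with $\mu\in\bigwedge^3 W$. The only preliminary computation I need is the standard consequence of \eqref{eq:rv}, that the $H^1$ of the complex $(H^{*}(M,\C),\lambda_x)$ is $\ker(\lambda_x)/\langle x\rangle$, of dimension $n-\rank(\lambda_x)-1$; hence
\[
R_1(M)\cap(H\setminus\{0\})=\{x\neq 0\mid \rank(\lambda_x)\le n-2\}.
\]

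For part~\eqref{r1} I would argue that $\dim\im(\cup)=\rank(\mu)$, and that a \emph{nonzero} alternating $3$-form has rank at least $3$, since $\bigwedge^3 W=0$ whenever $\dim W<3$. Hence $\dim\im(\cup)\in\{0\}\cup\{3,4,\dots\}$, so it is never equal to $1$; that is, $H$ is not $1$-isotropic. If one prefers to avoid the rank of a $3$-form, the same conclusion follows by assuming $\im(\cup)=\langle w\rangle$, writing $x\cup y=c(x,y)\,w$ and $\mu(x,y,z)=c(x,y)\,\ell(z)$ with $\ell(z)=\langle w\cup z,[M]\rangle\neq 0$, and checking that total antisymmetry of $\mu$ forces $c\equiv 0$, a contradiction.

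For part~\eqref{r2} I use that $\iota_x\mu$ is skew-symmetric, so its rank is even, and that $x$ lies in its radical because $\mu(x,x,\cdot)=0$; hence $\rank(\lambda_x)=\rank(\iota_x\mu)\le n-1$. When $n$ is even, an even integer that is $\le n-1$ is automatically $\le n-2$, so every $x\neq0$ meets the criterion above and lies in $R_1(M)$; together with $0\in R_1(M)$ this yields $R_1(M)=H$. For part~\eqref{r3}, suppose $P\subset H$ is a $0$-isotropic hyperplane, so that $x\cup y=0$ for all $x,y\in P$ and hence $\mu(x,y,z)=0$ whenever two of the arguments lie in $P$. Writing $H=P\oplus\langle v\rangle$ and expanding in an adapted basis, every monomial of $\mu$ uses three distinct basis vectors, at most one of which is $v$; thus each monomial has at least two arguments in $P$ and so vanishes, giving $\mu\equiv 0$. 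By nondegeneracy of the Poincar\'{e} pairing this forces $\cup\equiv 0$, whence $\lambda_x=0$ for every $x$ and $R_1(M)=H$.

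I expect the only genuine obstacle to be setting up the Poincar\'{e}-duality dictionary cleanly in part~\eqref{r1}: identifying $\dim\im(\cup)$ with $\rank(\mu)$ and invoking the minimal-rank bound $\rank(\mu)\ge 3$ for nonzero alternating $3$-forms. Once that dictionary is in place, the parity-and-radical bookkeeping in~\eqref{r2} and the antisymmetry vanishing in~\eqref{r3} are routine, as is the reduction of $R_1(M)$ to the rank condition on $\lambda_x$.
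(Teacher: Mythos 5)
Your proposal is correct, and your proof of part~\eqref{r2} is essentially the argument the paper itself gives in Remark~\ref{rem:bp}: after the Poincar\'{e}-duality identification, $\lambda_x$ becomes the skew form $\iota_x\mu_M$ on $H^1(M,\C)$, which has even rank and contains $x$ in its radical, so for $n$ even its rank is at most $n-2$ and every nonzero $x$ resonates (the paper phrases this as the vanishing of the determinant of an odd-size skew-symmetric matrix, which is the same parity argument). For parts~\eqref{r1} and~\eqref{r3} the paper offers no proof at all --- it simply cites \cite{DS07} --- so there your self-contained arguments are a genuine addition, and they are sound: the second, ``coordinate-free'' version of \eqref{r1} (writing $x\cup y=c(x,y)w$, $\mu=c\otimes\ell$ with $\ell\neq 0$ by Poincar\'{e} duality, and deriving $c\equiv 0$ from total antisymmetry) is airtight and avoids having to justify the identification of $\dim\im(\cup_M)$ with the support rank of $\mu_M$, which your first version asserts without proof (it is a true but not entirely trivial linear-algebra fact, so the fallback is worth keeping). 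Your proof of \eqref{r3} actually establishes the stronger statement that the mere existence of a $0$-isotropic hyperplane anywhere in $H^1(M,\C)$ forces $\mu_M=0$ and hence $\cup_M=0$ on $H^1$; the hypothesis that the hyperplane sits inside $R_1(M)$ is never used. The only loose ends are degenerate ones the paper also ignores: the case $b_1(M)=0$ in \eqref{r2}, and the implicit assumption $b_1(M)\ge 2$ when you conclude $R_1(M)=H^1(M,\C)$ from $\cup_M=0$ in \eqref{r3} (harmless, since a $0$-isotropic hyperplane is by definition nonzero, and one should separately note the trivial case $b_1=1$).
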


In the next remark, we provide an alternative proof of Part \eqref{r2} 
of this theorem, and further information on the remaining case, 
when $b_1(M)$ is odd. 

\begin{remark}
\label{rem:bp}
Assume $n=b_1(M)>0$, and let $x\in H^1(M,\C)$ be a non-zero 
element. By definition, $x \not\in R_1(M)$ if and only if the linear map
$\lambda_x\colon H^1(M, \C) \to H^2(M, \C)$ has rank $n-1$. 
Choose a basis $\{e_1,\dots ,e_n\}$ for $H_1(M,\C)$ so that 
$x=e_n^*$. With notation as above, identify the subspaces spanned by 
$\{e^*_1,\dots ,e^*_{n-1} \}$ and $\{e^{\vee}_1,\dots ,e^{\vee}_{n-1} \}$
with $\C^{n-1}$, and let $\lambda\colon \C^{n-1} \to \C^{n-1}$ 
be the  restriction of $\lambda_x$ to those subspaces. 
The matrix of $\lambda$ is skew-symmetric, with entries 
$\mu_{ijn}$, for $1\le i,j <n$; moreover, $\lambda$ and 
$\lambda_x$ have the same rank. Hence, $x \not\in R_1(M)$ 
if and only if $\det (\lambda)\ne 0$.  There are two cases to consider. 

If $n=b_1(M)$ is even, it follows that $\det (\lambda)=0$. 
Therefore, $R_1(M)=H^1(M,\C)$.

If $n=b_1(M)$ is odd, say $n=2m+1$, it follows that 
$R_1(M)\ne H^1(M,\C)$ if and only if
\begin{equation}
\label{eq:generic}
\eta = (e_1 \wedge e_2 + \dots + e_{2m-1} \wedge 
e_{2m})\wedge e_{2m+1}+\sum_{1\le i,j,k\le 2m} \mu_{ijk} 
e_i \wedge e_j \wedge e_k  ,
\end{equation}
with respect to some basis $\{e_1,\dots ,e_n\}$. This means 
precisely that the Poincar\'{e} duality algebra $H^*(M, \C)$
is generic, in the sense of definition $(3')$ from \cite[p.~463]{BP}. 
\end{remark}

\section{Quasi-K\"{a}hler $3$-manifold groups}
\label{sect:qk3}

This section is devoted to the proof of Theorem \ref{thm:malclass} 
from the Introduction, namely, the nontrivial 
implication, \eqref{m1}$\Rightarrow$\eqref{m2}. 
Recall $\Sigma_g$ is the Riemann surface 
of genus $g>0$, with fundamental group $\Pi_g$. 
It is enough to prove the following lemma. 

\begin{lemma}
\label{lem:mal3k}
Let $M$ be a closed, orientable $3$-manifold, and  
let $X$ be a quasi-K\"{a}hler manifold. Assume both 
$\pi=\pi_1(M)$ and $G=\pi_1(X)$ are $1$-formal, 
and their Malcev completions are isomorphic. 
Then $\m(\pi)$ is isomorphic to either $0$, 
$\m(F_n)$, or $\m(\Z\times \Pi_g)$.  
\end{lemma}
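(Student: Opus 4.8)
The plan is to exploit the sharp dichotomy between the isotropicity behavior of resonance components in the quasi-K\"{a}hler setting (Theorem \ref{thm:posobstr}) and the structural constraints on $R_1$ of a $3$-manifold (Theorem \ref{thm:res3}). Since $\m(\pi)\cong\m(G)$ and both groups are $1$-formal, the corestricted cup products agree, so $R_1(\pi)=R_1(G)$ and the $p$-isotropicity of any subspace $W\subseteq H^1$ is the same whether computed via $\cup_\pi$ or $\cup_G$ (Lemma \ref{lem:malcev}). Thus I may freely transport isotropicity information back and forth. I would first dispose of the low $b_1$ cases: if $n:=b_1(\pi)=0$ then $H^1=0$ and $\m(\pi)=0$; if $n=1$ then $H^1(\pi,\C)$ is $1$-dimensional, hence $\cup$ is automatically trivial there and $\m(\pi)\cong\m(\Z)=\m(F_1)$.

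For $n\ge 2$, the strategy is to read off the decomposition of $R_1(G)$ into its positive-dimensional components $W_f$ coming from admissible maps, and to classify which combinations are compatible with being a $3$-manifold. By Theorem \ref{thm:posobstr}, each such $W_f$ is either $0$-isotropic (when $C$ is noncompact, so $W_f\cong H^1(F_k)$ for $k=b_1(C)\ge 1$) or $1$-isotropic (when $C$ is compact, so $W_f\cong H^1(\Sigma_h)$ with $h\ge 1$, giving $\dim W_f=2h$). I would then split into the even and odd $b_1$ cases. If $n$ is even, Theorem \ref{thm:res3}\eqref{r2} forces $R_1(G)=R_1(\pi)=H^1$, so the whole space is a resonance component; combined with Theorem \ref{thm:res3}\eqref{r1}, which says $H^1$ is \emph{not} $1$-isotropic, this should pin down the situation to a $0$-isotropic $H^1$, i.e. the free case $\m(F_n)$. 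If $n$ is odd, I expect $H^1$ itself not to be a component (by the genericity discussion in Remark \ref{rem:bp}), so $R_1$ is a proper subvariety assembled from the $W_f$'s, and I would argue that Theorem \ref{thm:res3}\eqref{r3} (a $0$-isotropic hyperplane inside $R_1$ forces $R_1=H^1$) rules out $0$-isotropic components of large dimension, squeezing the remaining possibilities toward a single $1$-isotropic $W_f=H^1(\Sigma_g)$ plus a complementary line, which is exactly the signature of $\Z\times\Pi_g$.

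The identification of the Malcev completion with the specified model then proceeds by matching corestricted cup products: once the isotropic-subspace data of $\cup_\pi$ coincides with that of $0$, $F_n$, or $\Z\times\Pi_g$, Lemma \ref{lem:malcev} and the $1$-formality of both sides yield $\m(\pi)\cong\m(F_n)$ or $\m(\pi)\cong\m(\Z\times\Pi_g)$ respectively (here I would use that for the product $\Z\times\Pi_g$ the holonomy Lie algebra, hence the quadratic closure, is determined by the cup form of a genus-$g$ surface together with one extra generator pairing trivially). The last step amounts to checking that the cup-product data of $\pi$, now known to be that of a surface group times a circle, admits no further degrees of freedom beyond $g$.

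The main obstacle I anticipate is the odd $b_1$ case: there the full space $H^1$ is not a resonance component, so I cannot simply invoke part \eqref{r2}, and I must instead control the interaction between the global $3$-form $\eta$ of \eqref{eq:eta} and the list of admissible-map components $W_f$. Concretely, the delicate point is excluding the presence of two or more positive-dimensional components, or a $0$-isotropic component together with the $3$-manifold constraints—this is where Theorem \ref{thm:res3}\eqref{r1} and \eqref{r3} must be leveraged in tandem to force exactly one $1$-isotropic block of even dimension $2g$ sitting inside an odd-dimensional $H^1$, leaving a single transverse direction. Showing that no translated components or extra pieces survive, and that the resulting cup-product form is genuinely that of $\Z\times\Pi_g$ rather than some other $1$-isotropic configuration, is the crux of the argument.
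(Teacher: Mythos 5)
Your overall strategy -- transporting isotropicity data through the Malcev completion and playing Theorem \ref{thm:posobstr} against Theorem \ref{thm:res3} -- is the right one, and your treatment of the case $R_1=H^1$ (a single $W_f$ which cannot be $1$-isotropic by Theorem \ref{thm:res3}\eqref{r1}, hence is $0$-isotropic, hence the free case) matches the paper's Step 2. But your case division by parity of $b_1$ contains a genuine error: odd $b_1$ does \emph{not} imply that $H^1$ fails to be a resonance component. Remark \ref{rem:bp} says $R_1(M)\ne H^1(M,\C)$ if and only if $\eta$ has the generic form \eqref{eq:generic}; this is an extra hypothesis, not a consequence of oddness. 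For $M=\#^{2k+1}S^1\times S^2$ one has $b_1$ odd, $\mu_M=0$, and $R_1(M)=H^1(M,\C)$, and the correct conclusion is $\m(F_{2k+1})$ -- your odd-case analysis, which tries to force a $1$-isotropic block of dimension $2g$ plus a transverse line, would derail here. The paper instead splits on whether $R_1(X)=H^1(X,\C)$ or not, and uses Theorem \ref{thm:res3}\eqref{r2} only in the direction ``$R_1\ne H^1$ forces $b_1$ odd.'' (Relatedly, $b_1=3$ with $\mu_M\ne 0$ must be handled by hand, since then $R_1(X)=\{0\}$ has no positive-dimensional components and the admissible-map machinery is vacuous; the paper does this in Step 1 by identifying $\mu_M\otimes\Q$ with $\mu_{T^3}\otimes\Q$.)

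The second and larger gap is the one you yourself flag as ``the crux'': when $R_1(X)\ne H^1(X,\C)$, you must show $R_1(X)$ is a \emph{single} $1$-isotropic component, and Theorem \ref{thm:res3}\eqref{r1},\eqref{r3} alone cannot do this. The paper's argument needs two inputs you do not mention: McMullen's almost-principality of the Alexander ideal, which via \eqref{eq:v1 3m} and the tangent cone theorem forces every irreducible component of $R_1(M)$ to have codimension one (so the components of $R_1(X)$ are hyperplanes over $\Q$); and the fact that distinct positive-dimensional components of $R_1(X)$ meet only at $0$ (Theorem 4.2 of \cite{DPS07}), which for $n\ge 3$ forces irreducibility. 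Finally, the identification $\m(G)\cong\m(\Z\times\Pi_g)$ is not a matter of ``matching isotropic-subspace data'': the transverse class $t$ does \emph{not} pair trivially with $W_f$ in $H^*(S^1\times\Sigma_g)$ (the summand $\Q\eta_0\otimes H^1(C)$ of $H^2$ is nonzero), and one must verify that the corresponding pairing on $X$ is likewise nondegenerate. The paper does this by building $F=(h,f)\colon X\to S^1\times C$ with $t=h^*(\eta_0)\notin R_1(X)$, checking $F^2$ is injective on each K\"{u}nneth summand via Poincar\'{e} duality on $C$, and invoking Lemma \ref{lem:sullivan}. Without these steps the proposal does not close.
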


\begin{proof}
The proof is broken into four steps.

\subsubsection*{Step 1}
\label{step1}
Recall from \S\ref{subsec:malcev lie} that the Malcev completion  
of a group determines its first Betti number.  Since, by assumption,  
$\m(\pi)\cong \m(G)$, we must have $b_1(\pi)=b_1(G)$.  Let $n$ denote 
this common Betti number.  We first dispose of some easy cases. 

Suppose $\mu_M=0$.  Then $\cup_M=\cup_{M_n}$, where 
$M_0=S^3$ and $M_n=\#^n S^1\times S^2$ for $n>0$.  Since 
$\pi_1(M_n)=F_n$ is $1$-formal, $\m(\pi)=\m(F_n)$, and 
we are done.  So we may assume for the rest of the proof 
that  $\mu_M\ne 0$.  This immediately forces $n\ge 2$.  

Now suppose $n=3$.  Since $\mu_M\ne 0$, we must have 
$\mu_M\otimes \Q=\mu_{T^3}\otimes \Q$, where 
$T^3=S^1\times \Sigma_1$ is the $3$-torus.  
Since $\pi_1(T^3)=\Z^3$ is $1$-formal, $\m(\pi)=\m(\Z^3)$, 
and we are done.  Thus, for the rest of the proof, we may 
assume $n\ge 4$. 

\subsubsection*{Step 2}
\label{step2}
This step takes care of the case when $R_1(X)= H^1(X, \C)$. 
From the discussion in \S\ref{subsec:res qk}, we know 
there is an admissible map $f\colon X\to C$ such that 
$W_f=H^1(X, \C)$.  According to Theorem \ref{thm:posobstr}, 
the subspace $W_f$ is either $0$-isotropic or $1$-isotropic. 

On the other hand, since $\m (\pi)=\m (G)$, we have 
$H^1(X, \C)= H^1(M, \C)$.  In view of Theorem \ref{thm:res3}\eqref{r1}, 
the space $H^1(M, \C)$ must be $0$-isotropic. Therefore, 
$W_f$ is $0$-isotropic.  Hence, by Theorem \ref{thm:posobstr}, 
the curve $C$ is non-compact. 

By construction, the map $f^1\colon H^1(C, \Q)\to H^1(X, \Q)$ 
is an isomorphism. Since $C$ is non-compact, 
$f^2\colon H^2(C, \Q)\to H^2(X, \Q)$ is trivially a monomorphism. 
Applying now Lemma \ref{lem:sullivan}, we conclude that $G$ 
has the same Malcev completion as $\pi_1(C)=F_n$.

\subsubsection*{Step 3}
\label{step3}
We now may assume that $R_1(X)\ne H^1(X, \C)$, which 
implies $R_1(M)\ne H^1(M, \C)$.  From 
Theorem \ref{thm:res3}\eqref{r2}, we infer that $n=2g+1$, 
for some $g\ge 2$. We claim that, in this case, 
$R_1(X)$ is a hyperplane, defined over $\Q$.

Indeed, Theorem \ref{thm:tcone} guarantees that 
$R_1(X)=TC_1(V_1(X))$, and likewise for $M$.
On the other hand, we know from \eqref{eq:v1 3m} that
$V_1(M)\cap \T^0(\pi)= V(\Delta^{\pi})$, away from $1$. 
Since $n\ge 4$, both subvarieties of $\T^0(\pi)$ contain $1$; 
see \cite[Proposition 3.11]{MP}. In particular, $\Delta^{\pi}\ne 0,1$. 
Hence, all irreducible components of $R_1(M)$ have codimension $1$.

The same property holds for $R_1(X)$. By the discussion preceding 
Theorem \ref{thm:posobstr}, all irreducible components of $R_1(X)$ 
must be hyperplanes, defined over $\Q$. Using Theorem 4.2 from 
\cite{DPS07} (which continues to hold in the quasi-K\"{a}hler case),
we infer that any two distinct components of $R_1(X)$ intersect only 
at $0$. Since $n\ge 3$, the variety $R_1(X)$ must be irreducible, 
and this proves our claim.

\subsubsection*{Step 4}
\label{step4}
By Theorems \ref{thm:posobstr} and \ref{thm:res3}\eqref{r3}, 
$R_1(X)=f^*H^1(C, \Q)$, the curve $C$ is compact of genus $g$, and 
$\omega:= f^*(\omega_0)\ne 0$, where $\omega_0\in H^2(C, \Q)$ is 
the generator corresponding to the complex orientation.

Pick a map $h\colon X\to S^1$ with the property that 
$t:= h^*(\eta_0)\not\in R_1(X)$, where $\eta_0\in H^1(S^1, \Q)$ 
is the orientation generator. Set $F=(h,f)\colon X\to S^1\times C$. 
By construction, $F^1\colon H^1(S^1\times C,\Q)\to H^1(X,\Q)$ 
is an isomorphism. We claim that $F^2\colon 
H^2(S^1\times C,\Q)\to H^2(X,\Q)$ is a monomorphism. 

To verify the claim, decompose the source according to 
the K\"{u}nneth formula, 
\[
H^2(S^1\times C, \Q)= 
\Q \cdot \omega_0 \oplus \big ( \Q \cdot \eta_0 \otimes H^1(C, \Q)\big ),
\]
and consider the restriction of $F^2$ to each of the two summands.

Firstly, $F^2$ is injective on the second summand. For otherwise, 
we could find an non-zero element $z\in R_1(X)$ with the property 
that $t\cup z=0$. But this contradicts our assumption that 
$t\not\in R_1(X)$. 

Secondly, we must check that $\omega \not\in t\cup R_1(X)$. 
Supposing the contrary, let $\omega =t\cup x$, with $0\ne x\in R_1(X)$. 
By Poincar\'{e} duality on $C$, we may find $y\in R_1(X)$ 
such that $\omega =y\cup x$. Thus, $(t-y)\cup x=0$, and so  
$t-y\in R_1(X)$, contradicting again the choice of $t$. 

The claim is now proved. Using Lemma \ref{lem:sullivan}, we conclude 
that $\m(G)=\m(\Z\times \Pi_g)$, and we are done. 
\end{proof}

\section{Corank and isotropy index}
\label{sect:isocut}

\subsection{Two numerical invariants}
\label{subsec:corank}

To every finitely generated group $G$, we 
associated in \S\ref{subsec:intro6} two non-negative 
integers---the corank and the isotropy index:
\begin{align}
\corank (G)&=\max \{n \mid \exists \: \text{epimorphism $G\surj F_n$}\},\\
i(G)&= \max\{ d \mid  \exists \: W\subseteq  H^1(G, \C),\: \dim W=d,\: 
\left.\cup_G\right|_{W\wedge W} =0\}.  \notag
\end{align}

Clearly, both these invariants are bounded above by the 
first Betti number $b_1(G)$.  If there is an epimorphism $G\surj H$, then  
$\corank(G)\ge \corank (H)$. In particular, if $b_1(G)>0$, then 
$\corank(G)>0$. Moreover, if $\phi\colon G\surj F_n$ 
is an epimorphism, the restriction of $\cup_G$ to 
$\phi^1(H^1(F_n,\C))$ vanishes; thus, 
\begin{equation}
\label{eq:coind}
\corank (G)\le i(G).
\end{equation}

The corank of a $3$-manifold group has been studied 
by Harvey \cite{Ha}, Leininger and Reid \cite{LR}, 
and Sikora \cite{Si}.  In particular, if $M$ is a closed 
$3$-manifold, the corank of $\pi_1(M)$ equals the 
``cut number" of $M$.

In view of Lemma \ref{lem:malcev}, the isotropy index $i(G)$ 
depends only on the Malcev completion $\m(G)$. In general, though, 
the corank of $G$ is not determined by $\m(G)$, even when 
$G$ is $1$-formal.  Adapting Example 6.18 from \cite{DPS-jump} 
to our situation illustrates this phenomenon.

\begin{example}
\label{ex:nonorientable}
Let $N=\#^3 \RP^2$ be the non-orientable  surface of genus 
$3$. It is readily seen that $N$ has the rational homotopy 
type of $S^1\vee S^1$.  Hence, $N$ is a formal space, 
and $G=\pi_1(N)$ is a $1$-formal group, with $b_1(G)=2$. 
Moreover, $\m(G)\cong \m(F_2)$.  

Suppose there is an epimorphism $\phi\colon G\surj F_2$. 
Then $\phi^* H^1(F_2, \Z_2)$ is a $2$-dimensional 
subspace of $H^1(G, \Z_2)=\Z_2^3$, isotropic with 
respect to $\cup_{G}$.  But this is impossible, by Poincar\'{e}  
duality with $\Z_2$ coefficients in $N=K(G,1)$.  Hence, 
$\corank (G)=1$, though of course, $\corank(F_2)=2$. 
\end{example}

\subsection{Proof of Theorem \ref{thm:cutiso}}
\label{subsec:cutiso1}
Taking into account inequality \eqref{eq:coind}, it is enough 
to prove the following lemma.

\begin{lemma}
\label{lem:corank}
Let $X$ be a quasi-K\"{a}hler manifold.  
Assume $G=\pi_1(X)$ is $1$-formal.  
Then $\corank (G)\ge i(G)$. 
\end{lemma}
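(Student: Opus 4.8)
The plan is to show that the isotropy index is realized by a subspace coming from an admissible map, and then to promote that linear-algebraic fact to an actual epimorphism onto a free group. Let $i(G)=d$, and choose a $d$-dimensional subspace $W\subseteq H^1(X,\C)$ on which $\cup_G$ vanishes, i.e.\ $W$ is $0$-isotropic (or $d=0$, which is trivial). The key structural input is the theory of \S\ref{subsec:res qk}: the positive-dimensional components of $R_1(G)$ are precisely the subspaces $W_f=f^*H^1(C,\C)$ attached to admissible maps $f\colon X\to C$, and by Theorem \ref{thm:posobstr} each such $W_f$ is $0$-isotropic exactly when $C$ is non-compact. The first step is therefore to argue that a maximal isotropic subspace $W$ must be contained in $R_1(G)$ and, in fact, must sit inside one of the non-compact components $W_f$. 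The reason $W\subseteq R_1(G)$ is that for $0\ne x\in W$ the whole of $W$ lies in $\ker(\cup_G)(x,-)$, so $\lambda_x$ has a large kernel and $x$ fails the rank condition defining the complement of $R_1(G)$.

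Next I would use the decomposition of $R_1(G)$ into irreducible components together with the fact (Theorem 4.2 of \cite{DPS07}, invoked already in Step~3 of Lemma \ref{lem:mal3k}) that distinct positive-dimensional components of the resonance variety of a $1$-formal quasi-K\"{a}hler group meet only at $0$. Since $W$ is irreducible (a linear subspace) and isotropic, it cannot spread across two components meeting only at the origin; hence $W$ lies in a single component, and being $0$-isotropic it must be one of the non-compact ones, $W\subseteq W_f=f^*H^1(C,\C)$ for some admissible $f\colon X\to C$ with $C$ non-compact. For a non-compact smooth curve, $\pi_1(C)=F_m$ is free of some rank $m=\dim_\C H^1(C,\C)=\dim_\C W_f\ge d$. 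The induced homomorphism $f_\sharp\colon G\to\pi_1(C)=F_m$ is surjective by admissibility, which already gives $\corank(G)\ge m\ge d=i(G)$.

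The one point that genuinely needs care, and which I expect to be the main obstacle, is passing from $\dim W_f\ge d$ to an \emph{epimorphism} onto a free group of rank at least $d$. The surjection $f_\sharp\colon G\surj F_m$ handles this cleanly provided the isotropic subspace $W$ realizing $i(G)$ actually lands inside a non-compact component rather than being forced to straddle or evade the component structure. The delicate case is when $W$ is maximal isotropic but one worries it might be realized only as a ``diagonal'' inside several compact ($1$-isotropic) components or might strictly exceed any single $W_f$; here the pairwise-trivial-intersection property is exactly what rules this out, forcing $W$ into one component, and the $0$-isotropicity (from Theorem \ref{thm:posobstr}) forces that component to be non-compact. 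Thus $d=i(G)\le\dim W_f=\operatorname{rank} F_m=\corank(f_\sharp)\le\corank(G)$, which is the desired inequality.

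A secondary technical check is the boundary case $d=0$ or $b_1(G)=0$, where $\corank(G)=0=i(G)$ trivially, and the case where $W$ is one-dimensional but $R_1(G)$ has no positive-dimensional components; in that situation a single nonzero cohomology class still yields a map $G\surj\Z=F_1$, giving $\corank(G)\ge 1$, so the inequality persists. Assembling these pieces yields $\corank(G)\ge i(G)$, completing the lemma and, with \eqref{eq:coind}, Theorem \ref{thm:cutiso}.
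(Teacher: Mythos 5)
Your overall strategy (locate a maximal $0$-isotropic $W$ inside a single component $W_f$ of $R_1(G)$ via the Arapura/DPS correspondence, then use the surjection $f_\sharp$) is the same as the paper's, but there is a genuine gap at the step where you claim that ``being $0$-isotropic, [$W$] must be [contained in] one of the non-compact ones.'' Theorem \ref{thm:posobstr} classifies the isotropicity of the \emph{whole} subspace $W_f$, not of its subspaces: a $0$-isotropic $W$ can perfectly well sit inside a $1$-isotropic $W_f$ coming from a \emph{compact} curve $C$. The simplest example is $X=\Sigma_g$ with $g\ge 2$ and $f=\id$: here $W_f=H^1(\Sigma_g,\C)$ is $1$-isotropic, yet it contains $0$-isotropic (Lagrangian) subspaces of dimension $g$, and these realize $i(G)$. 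Your argument would wrongly force $C$ to be non-compact in this situation, and moreover the final inequality you write, $\corank(G)\ge\dim W_f$, is not what $f_\sharp$ delivers when $C$ is compact (one only gets $\corank(G)\ge g$, not $\ge 2g=\dim W_f$).

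The paper closes exactly this case with a separate argument: if $C$ is compact of genus $g$, then $W$ is a $0$-isotropic subspace of the $1$-isotropic space $H^1(C,\C)$, which is $2g$-dimensional and carries an essentially symplectic pairing, so $d=\dim W\le g$; composing $f_\sharp$ with the standard epimorphism $\Pi_g\surj F_g$ then gives $\corank(G)\ge g\ge d$. You need to add this branch. Two minor further remarks: your appeal to Theorem 4.2 of \cite{DPS07} (distinct components meeting only at $0$) is unnecessary --- the linear subspace $W$ is irreducible, hence automatically lies in a single irreducible component of $R_1(G)$ once you know $W\subseteq R_1(G)$, which requires $d\ge 2$; and in the boundary case $d=1$ the epimorphism onto $\Z$ comes from $b_1(G)\ge 1$ (via the abelianization), not directly from a single cohomology class.
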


\begin{proof}
Set $d:=i(G)$.  We may assume $d\ge 2$. 
Indeed, if $d=0$ we are done, and if $d=1$ then 
$b_1(G)\ge 1$, which implies $\corank (G)\ge 1$. 

Let $W\subseteq H^1(X, \C)$ be a $d$-dimensional, $0$-isotropic 
subspace. Clearly, $W\subseteq R_1(X)$, since $d\ge 2$.
By the discussion preceding Theorem \ref{thm:posobstr}, there 
is an admissible map, $f\colon X\to C$, onto a smooth complex 
curve with $\chi(C)<0$, such that $W\subseteq W_f$. 
In particular, we have an epimorphism $f_{\sharp}\colon G\surj \pi_1(C)$.  
The argument splits according to  the two cases from 
Theorem \ref{thm:posobstr}. Set $b=b_1(C)$.

\begin{enumerate}
\item {\em $C$ is non-compact}. 
In this case,  $\pi_1(C)=F_b$. Thus, we have a surjection 
$f_{\sharp}\colon G\surj F_b$, and so $\corank (G)\ge b$. 
On the other hand, $b=\dim W_f\ge d$.  

\item {\em $C$ is compact}.
In this case, $b=2g$, where $g$ is the genus of $C$, and $W$ is a 
$0$-isotropic subspace of the $1$-isotropic space $H^1(C, \C)$. 
Hence, $d\le g$. Composing the obvious epimorphism 
$\pi_1(C)\surj F_g$ with $f_{\sharp}$, we obtain a surjection 
$G\surj F_g$. Hence, $g\le \corank (G)$. 
\end{enumerate}

In either case, we conclude that $d\le \corank(G)$, and 
this ends the proof. 
\end{proof}

\subsection{Proof of Corollary \ref{cor:cutiso}}
\label{subsec:cutiso2}
We need to determine the corank of a quasi-K\"{a}hler, 
$1$-formal, $3$-manifold group.  In view of 
Theorems \ref{thm:malclass} and \ref{thm:cutiso},  
it is enough to compute $i(S^3)$, $i(\# ^n S^1\times S^2)$, 
and $i(S^1\times \Sigma_g)$. We do this next. 

\begin{lemma}
\label{lem:index}
The following hold.
\begin{enumerate}
\item \label{i1}
If $M=S^3$ or $M= \# ^n S^1\times S^2$, then $i(M)=b_1(M)$.
\item \label{i2}
If $M= S^1\times \Sigma_g$, with $g\ge 1$, then 
$i(M)=\frac{1}{2} (b_1(M)-1)$.
\end{enumerate}
\end{lemma}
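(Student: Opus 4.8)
The plan is to compute the cup-product form on $H^1$ in each case and read off directly the maximal dimension of a $0$-isotropic subspace, using the identification $i(M)=i(\pi_1(M))$ recorded in \S\ref{subsec:intro6}. Part \eqref{i1} is immediate. When $M=S^3$ we have $b_1(M)=0$, so there is nothing to prove. When $M=\#^n S^1\times S^2$, the fundamental group is the free group $F_n$; since $H^2(F_n,\C)=0$, the map $\cup_{F_n}$ is identically zero, so all of $H^1(F_n,\C)$ is $0$-isotropic and $i(M)=i(F_n)=b_1(F_n)=n=b_1(M)$.

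The substance lies in part \eqref{i2}, where $b_1(M)=2g+1$. Here I would fix a generator $t$ of $H^1(S^1,\C)$ together with a symplectic basis $a_1,b_1,\dots,a_g,b_g$ of $H^1(\Sigma_g,\C)$, so that $a_i\cup b_j=\delta_{ij}\,\omega$ with $\omega$ the area class, while $a_i\cup a_j=b_i\cup b_j=0$ and $t\cup t=0$. The key observation is that, via the K\"{u}nneth formula, $H^2(M,\C)$ splits into the summand $H^1(S^1,\C)\otimes H^1(\Sigma_g,\C)$, which contains every product $t\cup v$, and the summand $H^2(\Sigma_g,\C)$, which contains every product $v\cup v'$ with $v,v'\in H^1(\Sigma_g,\C)$; moreover $v\mapsto t\cup v$ is injective on $H^1(\Sigma_g,\C)$. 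Writing a general element of $H^1(M,\C)$ as $w=\alpha t+v$, a short computation using graded commutativity gives $w\cup w'=t\cup(\alpha v'-\alpha' v)+v\cup v'$, and the two terms lie in distinct K\"{u}nneth summands. Hence a subspace $W$ is $0$-isotropic precisely when, for all $w,w'\in W$, one has both $\alpha v'=\alpha' v$ in $H^1(\Sigma_g,\C)$ and $v\cup v'=0$ in $H^2(\Sigma_g,\C)$.

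From here I would argue by cases on the linear functional $\ell\colon W\to\C$, $w\mapsto\alpha$. If $\ell\equiv 0$, then $W\subseteq H^1(\Sigma_g,\C)$ and the second condition says exactly that $W$ is isotropic for the symplectic form $\cup_{\Sigma_g}$, so $\dim W\le g$, with a Lagrangian such as $\langle a_1,\dots,a_g\rangle$ realizing equality. If $\ell\not\equiv 0$, choose $w_0=t+v_0\in W$; applying the first condition to the pair $(w_0,w)$ for an arbitrary $w=\alpha t+v\in W$ forces $v=\alpha v_0$, whence $w=\alpha w_0$ and $\dim W\le 1$. Since $g\ge 1$, the maximum over the two cases is $g$, giving $i(M)=g=\tfrac{1}{2}(b_1(M)-1)$. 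I expect the only delicate point to be the clean separation of the two K\"{u}nneth summands of $H^2(M,\C)$ together with the injectivity of $t\cup(-)$; once these are in hand, the isotropy computation reduces to the classical fact that the maximal isotropic subspaces of a $2g$-dimensional symplectic space are the Lagrangians, of dimension $g$.
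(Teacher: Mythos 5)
Your proof is correct and follows the same route as the paper: part \eqref{i1} via vanishing of the cup product, and part \eqref{i2} by exhibiting a Lagrangian in $H^1(\Sigma_g,\C)$ for the lower bound and reducing the upper bound to the symplectic form on $H^1(\Sigma_g,\C)$. The paper merely compresses your K\"{u}nneth computation and case analysis on the $t$-coefficient into the phrase ``a standard argument,'' so you have simply supplied the details it omits.
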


\begin{proof}
If $M=S^3$ or $M=\#^n S^1\times S^2$, then $\cup_M=0$, 
and so $i(M)=b_1(M)$. If $M=S^1\times \Sigma_1$, it is 
readily seen that $i(M)=1$, as asserted.

In the remaining case,  $M=S^1\times \Sigma_g$, with $g\ge 2$, we 
have to check that $g=i(M)$. By considering a maximal 
$0$-isotropic subspace $W\subseteq H^1(\Sigma_g, \C)$, we 
see that $g\le i(M)$.  A standard argument now shows that 
every $0$-isotropic subspace $W\subseteq H^1(M, \C)$ of dimension 
$i(M)$ is contained in $H^1(\Sigma_g, \C)$; thus, $g\ge i(M)$. 
\end{proof}
 
\subsection{On a certain class of $3$-manifolds}
\label{subsec:cutiso3}
We conclude this section with an application of 
Corollary \ref{cor:cutiso} to low-dimensional topology.  
The next result identifies the class of $3$-manifolds 
whose fundamental groups satisfy certain properties. 

\begin{corollary}
\label{cor:pc}
Let $M$ be a closed, orientable $3$-manifold, with fundamental 
group $G=\pi_1(M)$.  The following are equivalent. 
\begin{enumerate}
\item \label{cr1} 
The group $G$ is quasi-K\"{a}hler, $1$-formal, 
presentable by commutator-relators, and 
has even first Betti number. 
\item \label{cr2}
The manifold $M$ is either $S^3$ or $\#^{2k} S^1\times S^2$.
\end{enumerate}
\end{corollary}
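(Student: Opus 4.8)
The plan is to establish the equivalence by proving both implications, with the forward direction (\ref{cr1}$\Rightarrow$\ref{cr2}) being the substantive one. For the reverse direction, I would observe that $S^3$ and $\#^{2k}S^1\times S^2$ have fundamental groups $1$ and $F_{2k}$ respectively, both of which are free (hence $1$-formal), presentable by commutator-relators, and have even first Betti number ($0$ and $2k$). Moreover, these are quasi-K\"{a}hler groups, since $1=\pi_1(\text{point})$ and $F_{2k}=\pi_1(\CP^1\setminus\{2k+1\ \text{points}\})$, as noted in \S\ref{subsec:intro3}. So the reverse implication is immediate.

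For the forward direction, the strategy is to invoke Corollary \ref{cor:cutiso}, which tells us that since $G$ is a quasi-K\"{a}hler, $1$-formal, $3$-manifold group with $b_1(G)=2k$ even, the Malcev completion of $G$ must be that of $F_{2k}$ (the surface case $\Z\times\Pi_g$ is excluded because it forces $b_1=2g+1$ odd). First I would handle the trivial case $b_1(G)=0$ separately: then $\m(G)=\m(1)$, and I must argue $M=S^3$. For the main case $b_1(G)=2k>0$, Corollary \ref{cor:cutiso} gives $\m(G)=\m(F_{2k})$, whence by Lemma \ref{lem:malcev} the corestriction of $\cup_G$ to its image agrees with that of $F_{2k}$, namely it is zero. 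The key deduction is then that the full cup-product triple form $\mu_M$ vanishes.

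The crux of the argument is to pass from ``$\cup_G$ has trivial corestriction'' (equivalently, the image of $\cup_M\colon H^1\wedge H^1\to H^2$ is zero) to the conclusion that $M$ has the cohomology ring of $\#^{2k}S^1\times S^2$, and then to upgrade this to a homeomorphism statement. Vanishing of the degree-one cup product already forces $\mu_M=0$, since by equation \eqref{eq:eta} the form $\eta=\mu_M\otimes\C$ is built from the triple products $\mu_{ijk}=\langle e_i^*\cup e_j^*\cup e_k^*,[M]\rangle$, and each such triple product factors through the degree-two class $e_i^*\cup e_j^*$, which now lies in the trivial image of $\cup_M$. Here the commutator-relator hypothesis on $G$ is essential: it guarantees $M$ has the rational homology of a connected sum of copies of $S^1\times S^2$ (equivalently, that $H_2$ is ``all spherical'' with no torsion obstructions entering the form), which together with $\mu_M=0$ and standard $3$-manifold classification pins down the homeomorphism type.

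The main obstacle I anticipate is the final topological step: deducing the precise homeomorphism type $S^3$ or $\#^{2k}S^1\times S^2$ from the cohomological data. Knowing $\cup_M=0$ and $b_1=2k$ determines the oriented homotopy type of the cohomology ring, but translating this into a statement about the manifold itself requires invoking the geometrization of $3$-manifolds (or at least a classification of closed orientable $3$-manifolds with free fundamental group). The role of the commutator-relator presentability is precisely to rule out $\RP^2$-summand pathologies of the type flagged in Example \ref{ex:nonorientable}, where $\m(G)=\m(F_2)$ yet $M$ is not a connected sum of $S^1\times S^2$'s; I expect this hypothesis to be exactly what forces $H_1(M,\Z)$ to be torsion-free and the manifold to be a connected sum with $S^2$-bundle pieces, after which the classification is standard.
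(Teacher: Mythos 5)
Your reverse implication is fine, but the forward direction has a genuine gap: you never actually establish that $G$ is isomorphic to a free group, and the cohomological route you propose cannot do so. From Corollary \ref{cor:cutiso} you extract only the Malcev-completion statement $\m(G)=\m(F_{2k})$, deduce $\cup_M=0$ (and $\mu_M=0$), and then hope that ``the cohomology ring plus standard $3$-manifold classification'' pins down the homeomorphism type. It does not: the integral cohomology ring of a closed orientable $3$-manifold does not determine $\pi_1$ or $M$ --- for instance, $\#^{2k} S^1\times S^2 \,\#\, \Sigma$ with $\Sigma$ the Poincar\'{e} homology sphere has torsion-free $H_1=\Z^{2k}$, vanishing cup products in degree one, and $\mu_M=0$, yet its fundamental group $F_{2k}*\pi_1(\Sigma)$ is not free. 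Any classification result you might invoke (Hempel, geometrization) applies only \emph{after} you know $\pi_1(M)$ is free, which is exactly the point you have not proved. Your suggestion that the commutator-relator hypothesis ``guarantees $M$ has the rational homology of a connected sum of $S^1\times S^2$'s'' is both weaker than what is needed and not how that hypothesis earns its keep.

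The paper's argument uses the \emph{corank} conclusion of Corollary \ref{cor:cutiso}, not merely the Malcev completion: since $b_1(G)=n$ is even, $\corank(G)=b_1(G)=n$, so there is an epimorphism $G\surj F_n$. The commutator-relator presentability gives $b_1(G)=(\text{number of generators})$, hence an epimorphism $F_n\surj G$ in the other direction. Composing yields an epimorphism $F_n\surj F_n$, which is an isomorphism because finitely generated free groups are Hopfian; hence $G\cong F_n$. Only then do Hempel's results and Perelman's proof of the Poincar\'{e} conjecture identify $M$ as $S^3$ or $\#^{n}S^1\times S^2$ (ruling out fake $3$-sphere summands). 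This two-epimorphisms-plus-Hopfian step is the key idea missing from your proposal; without it the passage from cohomological data to the group, and thence to the manifold, does not go through. (Also, your concern about $\RP^2$-summands is moot here, since $M$ is assumed orientable; Example \ref{ex:nonorientable} is relevant only as a warning that $\m(G)=\m(F_n)$ alone never suffices.)
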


\begin{proof}
The implication \eqref{cr2} $\Rightarrow$ \eqref{cr1} 
is clear.  So assume $G$ satisfies the conditions from \eqref{cr1}. 
By Corollary \ref{cor:cutiso}, $\corank (G)=b_1(G)=n$, with $n$ even.  
Thus, we have an epimorphism $G\surj F_n$.  Since $G$ is 
presented by commutator-relators, there is another epimorphism, 
$F_n \surj G$.  As is well known, finitely generated 
free groups are Hopfian, i.e., any epimorphism 
$F_n\surj F_n$ must be an isomorphism, 
see \cite[Theorem 2.13]{MKS}.  Hence, $G=F_n$.
Assertion \eqref{cr2} now follows from standard $3$-manifold 
theory \cite{He} and the Poincar\'{e} conjecture, 
proved by Perelman in \cite{Pe1, Pe2}.
\end{proof}

\section{Quasi-homogeneous surface singularities and $1$-formality}
\label{sect:surf}

Let $(X,0)$ be a complex, normal, isolated surface singularity. 
We may embed $(X,0)$ in $(\C^n,0)$, for some sufficiently 
large $n$. Then, for sufficiently small $\epsilon >0$, the 
singularity link, $M=X\cap S^{2n-1}_{\epsilon}$, is a closed, 
oriented, smooth $3$-manifold, whose oriented diffeomorphism 
type is independent of the choices made. 

Assume now $(X,0)$ is a quasi-homogeneous singularity. 
We may then represent $(X,0)$ by an affine surface $X$ 
with a ``good" $\C^*$-action.  By definition, this means 
the isotropy groups $\C^*_x$ are finite, for all $x\ne 0$, 
and the induced action of $\C^*$ on the finite-dimensional 
vector space $\mathfrak{m}_{(X,0)}/\mathfrak{m}_{(X,0)}^2$ 
has only positive weights. For details on this subject, we refer 
to Orlik and Wagreich \cite{OW}, Looijenga \cite[p.~175]{Lo}, 
and also \cite[p.~52, 66]{Di92}.

\begin{prop}
\label{prop:qh}
Let $(X,0)$ be a quasi-homogeneous isolated surface singularity, 
and let $M$ be the corresponding singularity link.  Then, the 
fundamental group $G=\pi_1(M)$ is a quasi-projective, 
$3$-manifold group.  Moreover,
\begin{enumerate}
\item \label{s1} 
If $b_1(M)=0$, then $M$ is formal, and so $G$ is $1$-formal.   
\item \label{s2}
If $b_1(M)>0$, then $G$ is not $1$-formal. 
\end{enumerate}
\end{prop}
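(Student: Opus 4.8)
The quasi-projectivity is essentially immediate: since $(X,0)$ is a quasi-homogeneous isolated surface singularity with a good $\C^*$-action, the link $M$ is the total space of a Seifert fibration over a quotient curve, and $G=\pi_1(M)$ is known to be quasi-projective (indeed, realizable by a smooth quasi-projective variety, namely a suitable $\C^*$-bundle complement). That $G$ is a $3$-manifold group is clear by construction. For part \eqref{s1}, when $b_1(M)=0$, the manifold $M$ is a rational homology sphere, hence formal by the examples cited in \S\ref{subsec:intro3}; formality of $M$ gives $1$-formality of $G$. So the entire content lies in part \eqref{s2}.

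\textbf{For the non-formality in \eqref{s2}, my approach is to exhibit an obstruction through the cup-product structure, using Remark \ref{rem:sullivan}.} The key observation is that, by Sullivan's result quoted there, the isolated surface singularity condition forces $\mu_M=0$; equivalently, the triple cup-product form $\eta$ of \eqref{eq:eta} vanishes identically. I would combine this with Poincar\'e duality on the closed oriented $3$-manifold $M$: the duality pairing $H^1(M,\C)\otimes H^2(M,\C)\to\C$ is nondegenerate, so $\cup_M\colon H^1\wedge H^1\to H^2$ is determined by $\eta$ together with this pairing. Since $\eta=0$, the composite $H^1\wedge H^1\xrightarrow{\cup_M}H^2\xrightarrow{\sim}H^1(M,\C)^*$ is \emph{skew} in a way that, once $b_1(M)=n>0$, the cup-product map $\cup_M$ cannot have the form dictated by a quadratic Malcev Lie algebra. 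Concretely, if $G$ were $1$-formal, then by the remarks in \S\ref{subsec:1formal} its Malcev completion---and hence all the resonance varieties---would be computed from the corestriction of $\cup_M$ to its image; I would then invoke Theorem \ref{thm:res3} to derive a contradiction with the isotropicity structure forced by $\eta=0$.

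\textbf{The cleanest route to the contradiction uses the resonance variety together with the almost-principality of the Alexander ideal.} Since $M$ is a closed orientable $3$-manifold, the relation $I^2_\pi\cdot(\Delta^\pi)\subset E_1(A_\pi)$ of \S\ref{subsec:res3} gives, via \eqref{eq:v1 3m}, a precise description of $V_1(M)\cap\T^0(\pi)$ as the zero locus of $\Delta^\pi$. If $G$ were $1$-formal, Theorem \ref{thm:tcone} would identify $R_1(M)$ with $TC_1(V_1(M))$, forcing $R_1(M)$ to be a hypersurface cut out by the \emph{homogeneous} leading form of $\Delta^\pi$. On the other hand, $\eta=0$ means that for every $x\in H^1(M,\C)$ the map $\lambda_x$ has rank bounded well below $n-1$ (indeed all the $\mu_{ijk}$ vanish, so $\lambda$ in the notation of Remark \ref{rem:bp} is the zero matrix), which by the criterion there gives $R_1(M)=H^1(M,\C)$---forcing $\Delta^\pi$ to vanish identically, contradicting $\Delta^\pi\neq 0$ whenever $b_1(M)\ge 4$. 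For the small Betti-number cases I would argue separately, using that $\eta=0$ directly contradicts the nonvanishing of the relevant $1$-minors of the Alexander matrix predicted by $1$-formality.

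\textbf{The main obstacle I anticipate is reconciling the low $b_1$ cases.} When $b_1(M)\in\{1,2,3\}$ the dimension-count arguments above degenerate: $V_1(M)$ may or may not pass through $1$, the tangent cone theorem gives weaker information, and one must verify directly that the vanishing of $\mu_M$ is genuinely incompatible with the quadratic-presentability of $\m(G)$. The conceptual heart of the proof is showing that a nonzero first Betti number combined with $\eta=0$ produces a Malcev Lie algebra that is \emph{not} quadratic---essentially because the defining relations of $\h(M)$ coming from $\partial_M$ are trivial (as $\partial_M$ is dual to $\cup_M$, which is controlled by the vanishing $\eta$), so $\h(M)$ would be free, whereas the true lower central series quotients of $\pi_1(M)$, computed from the nonprincipal part of the Alexander ideal, do not match those of a free group. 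Making this comparison quantitative---ideally by exhibiting a discrepancy already in $\gr_3$ or through a nonvanishing Massey product---is the step requiring the most care.
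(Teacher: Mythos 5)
Your treatment of quasi-projectivity and of part \eqref{s1} matches the paper's ($M$ is homotopy equivalent to the smooth quasi-projective surface $X^*=X\setminus\{0\}$, and a rational homology sphere is formal), and you correctly isolate Sullivan's vanishing $\mu_M=0$, i.e.\ $\cup_M=0$ on $H^1(M,\C)$, as the relevant structural fact. But neither of your proposed routes to a contradiction in part \eqref{s2} actually closes. The Alexander-polynomial route fails outright: from $\eta=0$ and Remark \ref{rem:bp} you get $R_1(M)=H^1(M,\C)$, and $1$-formality plus Theorem \ref{thm:tcone} would then force $V_1(M)\cap\T^0(\pi)=\T^0(\pi)$, i.e.\ $\Delta^{\pi}=0$ --- but this is not a contradiction. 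There is no general fact that $\Delta^{\pi}\neq 0$ when $b_1\ge 4$ (consider $\#^4 S^1\times S^2$), and for the singularity links at hand $\Delta^{\pi}$ \emph{does} vanish identically when $g>1$, since $\T^0(G)$ is itself an irreducible component of $V_1(M)$ by Corollary \ref{cor:v1seifert}. Likewise your ``conceptual heart'' --- that $b_1(M)>0$ together with $\eta=0$ forces $\m(G)$ to be non-quadratic --- is false as a general principle about $3$-manifolds: $\#^n S^1\times S^2$ has $b_1=n>0$, $\eta=0$, and $1$-formal fundamental group $F_n$. A $1$-formal group with vanishing cup product on $H^1$ simply has $\m(G)\cong\m(F_{b_1})$; no contradiction can come from the $3$-manifold side alone, and your closing admission that the $\gr_3$/Massey-product comparison ``requires the most care'' is precisely where the proof is missing.

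The ingredient you never use is the one the paper's argument turns on: the orbit map $p\colon X^*\to C=X^*/\C^*$ onto a \emph{compact} curve of genus $g=\tfrac12 b_1(M)\ge 1$ is admissible (surjective, with connected fibers isomorphic to $\C^*$), and $p^*\colon H^1(C,\C)\to H^1(X^*,\C)$ is an isomorphism. If $G$ were $1$-formal, Theorem \ref{thm:posobstr}\eqref{a2} (compact target) would force $W_p=H^1(X^*,\C)$ to be $1$-isotropic, contradicting Theorem \ref{thm:res3}\eqref{r1} --- equivalently, contradicting $\mu_M=0$. It is the quasi-projective geometry of the fibration over the compact curve, not the cup-product structure of the $3$-manifold by itself, that produces the obstruction; this also handles every value of $b_1(M)>0$ uniformly, so no separate low-$b_1$ analysis is needed.
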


\begin{proof}
Set $X^*=X \setminus \{0\}$. Clearly, $X^*$ is a smooth, 
quasi-projective variety. Moreover, the singularity link $M$ 
is homotopy equivalent to $X^*$. The first assertion follows 
at once. 

Consider now the orbit space, $C=X^*/\C^*$.  This is a 
smooth projective curve of genus $g=\frac{1}{2} b_1(M)$.  
In particular, $b_1(M)$ must be even. 

If $b_1(M)=0$, then $M$ is a rational homology $3$-sphere, i.e., 
$H^*(M,\Q)\cong H^*(S^3,\Q)$.  A result of Halperin and Stasheff 
\cite[Corollary 5.16]{HS} implies that $M$ is formal. 

Now assume $b_1(M)>0$. Then the curve $C=X^*/\C^*$ 
has genus $g\ge 1$. Let $p\colon X^* \to C$ be the projection map. 
Obviously, $p$ is surjective, and all its fibers are connected, since 
isomorphic to $\C^*$.  Therefore, $p$ is admissible (in the sense 
of \S\ref{subsec:cvqk}), and the induced homomorphism, 
$p^*\colon H^1(C,\C) \to H^1(X^*,\C)$, is an isomorphism.

By Theorem \ref{thm:res3}\eqref{r1}, the space 
$H^1(M,\C)$ is not $1$-isotropic.  Thus, $H^1(X^*,\C)=p^*H^1(C,\C)$ 
is also non-$1$-isotropic. In view of Theorem \ref{thm:posobstr}\eqref{a2}, 
the group $G=\pi_1(X^*)$ cannot be $1$-formal. 
\end{proof}

Using the above proof, we can produce examples of smooth 
affine surfaces whose fundamental groups are not $1$-formal.
With notation as above, assume $g \geq 1$. 

\begin{prop}
\label{prop:affine}
Let $f\colon X\to \C$ be a non-zero regular function with 
$f(0)=0$. Suppose the curve $V(f)=\{x \in X\mid f(x)=0\}$
intersects each fiber of the projection $p\colon X^* \to C$ in 
a finite number of points. Then $U=X \setminus V(f)$ is a 
smooth affine surface whose fundamental group is not $1$-formal.
\end{prop}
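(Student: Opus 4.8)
The plan is to show that $U = X \setminus V(f)$ is affine and smooth, then exhibit an admissible map from $U$ onto a compact curve $C$ of genus $g \ge 1$ whose pullback identifies $H^1(C,\C)$ as a $1$-isotropic subspace of $H^1(U,\C)$, and finally invoke Theorem~\ref{thm:posobstr}\eqref{a2} together with the non-$1$-isotropicity coming from the $3$-manifold constraint to conclude that $\pi_1(U)$ cannot be $1$-formal. The geometric heart is that the good $\C^*$-action on $X$ descends to $U$: since $V(f)$ meets each $\C^*$-orbit closure (equivalently, each fiber of $p\colon X^* \to C$) in only finitely many points, removing $V(f)$ does not destroy the fibration structure over $C$, and the restricted projection $q\colon U \setminus\{0\} \to C$ (or rather the induced map $U \to C$ after checking $0 \notin U$ or handling it) remains surjective with connected generic fiber.

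First I would verify that $U$ is a smooth affine surface. Smoothness is immediate since $U$ is an open subset of the smooth locus $X^*$ (note $0 \notin U$ because $f(0)=0$ forces $0 \in V(f)$); affineness follows because $U = X \setminus V(f)$ is the complement of a hypersurface in the affine variety $X$, hence is itself affine (it is the nonvanishing locus of the regular function $f$, so $U = \Spec$ of a localization). This already makes $U$ a smooth quasi-projective, indeed affine, variety, so $\pi_1(U)$ is a quasi-projective group.

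Next I would build the admissible map. Restrict $p$ to $U$ to get $q\colon U \to C$. The key verification is that $q$ is admissible in the sense of \S\ref{subsec:cvqk}: it is holomorphic, surjective (the generic fiber $\C^*$ is only punctured at finitely many points of $V(f)$, so it stays nonempty and, being $\C^*$ minus finitely many points, remains connected), and extends to a map of smooth compactifications with normal-crossing boundary divisor. Here I expect the main obstacle: one must check that $q^*\colon H^1(C,\C) \to H^1(U,\C)$ is injective and that the image $W_q = q^* H^1(C,\C)$ is a genuine positive-dimensional component of $R_1(U)$, so that Theorem~\ref{thm:posobstr} applies. Because $C$ is compact of genus $g \ge 1$, part~\eqref{a2} would then assert that if $\pi_1(U)$ \emph{were} $1$-formal, the subspace $W_q$ would be $1$-isotropic. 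The cup product on $W_q = q^*H^1(C,\C)$ is the pullback of the cup product on the compact curve $C$, which is the nondegenerate symplectic form, so $W_q$ is indeed $1$-isotropic as a subspace, and this is consistent; the contradiction must instead be manufactured exactly as in Proposition~\ref{prop:qh}, by comparing with the $3$-manifold $M$.

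Finally, to extract the contradiction I would relate $U$ back to the singularity link. The inclusion $U \hookrightarrow X^*$ (equivalently the map to $C$) and the homotopy equivalence $M \simeq X^*$ let me transport the constraint that $H^1(M,\C)$ is not $1$-isotropic, via Theorem~\ref{thm:res3}\eqref{r1}. Since $q^*$ identifies $H^1(C,\C)$ with its image and this image sits inside $H^1(U,\C)$ whose relevant cup-product behavior is governed by the same curve $C$ feeding both $U$ and $M = X^*$-link, the argument of Proposition~\ref{prop:qh} carries over verbatim: the admissible map onto the compact curve $C$ would force $1$-isotropicity of $W_q$ under a $1$-formality hypothesis, yet the $3$-manifold origin forbids the ambient $H^1$ from being $1$-isotropic, and a compatibility check shows these cannot coexist. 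Hence $\pi_1(U)$ is not $1$-formal. The delicate point throughout is establishing admissibility of $q$ after deleting $V(f)$ — specifically the connectedness and surjectivity of the generic fiber and the clean normal-crossing compactification — which is where the hypothesis on $V(f)$ meeting fibers finitely is used.
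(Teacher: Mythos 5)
Your setup (smoothness and affineness of $U$, the restricted admissible map $q\colon U\to C$, and the appeal to Theorem~\ref{thm:posobstr}\eqref{a2} under a $1$-formality hypothesis) matches the paper's strategy, but the proposal never actually produces the contradiction, and the one concrete claim you make about the cup product is backwards. You assert that the cup product on $W_q=q^*H^1(C,\C)$ ``is the pullback of the nondegenerate symplectic form'' on $C$, so that $W_q$ ``is indeed $1$-isotropic \ldots and this is consistent.'' That is precisely what is \emph{not} true here: the pullback $q^*$ factors as $\iota^*\circ p^*$ through $H^*(X^*,\C)$, and by Sullivan's theorem (\cite{Su75}, quoted in Remark~\ref{rem:sullivan}) the triple Milnor form $\mu_M$ of the link of an isolated surface singularity vanishes, which by Poincar\'{e} duality on $M\simeq X^*$ forces the cup product to be identically zero on $H^1(X^*,\C)$. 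Hence for $a,b\in H^1(X^*,\C)$ one has $\iota^*a\cup\iota^*b=\iota^*(a\cup b)=0$, so $W_q=\iota^*H^1(X^*,\C)$ is $0$-isotropic in $H^1(U,\C)$. This is the contradiction with the $1$-isotropy forced by Theorem~\ref{thm:posobstr}\eqref{a2}, and it is the entire content of the proof; by declaring the $1$-isotropy ``consistent'' you have discarded it.

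The fallback you then propose --- running the argument of Proposition~\ref{prop:qh} ``verbatim'' and letting ``a compatibility check'' finish --- does not close the gap. Proposition~\ref{prop:qh} uses Theorem~\ref{thm:res3}\eqref{r1} applied to the \emph{ambient} space $H^1(X^*,\C)=p^*H^1(C,\C)$, which coincides with $H^1(M,\C)$. For $U$ this fails on both counts: $U$ is not homotopy equivalent to $M$, $H^1(U,\C)$ is in general strictly larger than $q^*H^1(C,\C)$, and ``not $1$-isotropic'' (rank $\ne 1$) for the cup product in $H^*(X^*)$ does not by itself control the rank of the cup product on the image subspace inside $H^2(U)$ after applying $\iota^*$. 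The stronger input --- Sullivan's vanishing $\mu_M=0$, giving rank exactly $0$ --- is the missing idea. (Your concerns about admissibility of $q$ and normal-crossing compactifications are legitimate bookkeeping but are not where the difficulty of the proposition lies.)
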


\begin{proof}
Let $A(X)$ be the coordinate ring of $X$. The coordinate 
ring of $U$ is $A(X)_f$, the localization of $A(X)$ 
with respect to the multiplicative system $\{f^k\}_{k\ge 1}$. 
Let $q\colon U \to C$ be the restriction of $p$ to $U$, 
and let $\iota\colon U\hookrightarrow X^*$ be the inclusion map. 

Assume $\pi_1(U)$ is $1$-formal.  Then, as above, 
Theorem \ref{thm:posobstr}\eqref{a2} implies that 
the subspace $q^* (H^1(C,\C))= \iota^*  (H^1(X^*,\C))$ 
is $1$-isotropic. On the other hand, by the 
aforementioned result of Sullivan \cite{Su75}, the 
cup-product is trivial on $H^1(X^*,\C)$. 
This contradiction ends the proof. 
\end{proof}

Here is an explicit family of examples. 

\begin{example}
\label{ex1}
Let $X$ be the surface in $\C^3$ given by the equation 
$x^d+y^d+z^d=0$, with $d \geq 3$.  Choose 
\begin{equation}
\label{eq:xyz}
f=x+y^2+z^3 \in A(X)=\C[x,y,z]/(x^d+y^d+z^d).
\end{equation}
It is readily verified that $V(f)\cap p^{-1}(c)$ is finite, 
for each $c\in C$.  Thus, $U=X \setminus V(f)$ is 
a smooth affine surface with $\pi_1(U)$ not $1$-formal. 
\end{example}

\section{Translated tori in characteristic varieties}
\label{sect:trans}

As in the previous section, let $X$ be an affine surface  
with a good $\C^*$-action and isolated singularity at $0$, 
$X^*=X \setminus \{0\}$, and 
$p\colon X^* \surj C$ the corresponding projection onto 
a smooth projective curve.  

Assume $C$ has genus $g\ge 1$. We then know from 
Proposition \ref{prop:qh}\eqref{s2} that the group 
$G=\pi_1(X^*)$ is not $1$-formal.  Of course, 
this prevents us from using Theorem \ref{thm:tcone} in 
computing the characteristic variety $V_1(G)$.  Nevertheless, 
such a computation can still be carried out 
in this situation, using different techniques. 

\subsection{Pontrjagin duality}
\label{subsec:pont}

First, we need some additional notation. Let $\Tors$ 
be the torsion part of the homology group $H_1(X^*,\Z)$. Since
$p^*\colon H^1(C,\C) \to H^1(X^*,\C)$ is an isomorphism, the 
group $\Tors$ is the kernel of  $p_*\colon H_1(X^*,\Z) \to H_1(C,\Z)$. 
Let $i_*\colon H_1(\C^*,\Z) \to H_1(X^*,\Z)$ be the morphism 
induced by the inclusion of a generic fiber of $p$ into $X^*$. 
It is clear that the image of $i_*$ is a cyclic subgroup in $\Tors$, 
that is, 
\begin{equation}
\label{eq:hh}
\im (i_*)=\langle h\rangle, \ \text{for some $h \in \Tors$}.
\end{equation}

The direct sum decomposition $H_1(X^*,\Z)= \Z^{2g} \oplus \Tors$ 
yields a corresponding decomposition 
of the character group $\T(G)=\Hom(G,\C^*)$,
\begin{equation}
\label{eq:pont}
\T(G)=(\C^*)^{2g} \times \hat{\Tors},
\end{equation}
where $\hat{\Tors} =\Hom(\Tors,\C^*)$ is the Pontrjagin dual of $\Tors$. 
In this decomposition, the identity component $\T^0(G)$ 
corresponds to the algebraic torus $(\C^*)^{2g} \times 1$.

Following \cite{Di07}, let us associate to the admissible map 
$p\colon X^* \to C$ the finite abelian group
\begin{equation}
\label{eq:tp}
T(p)=\Tors/\langle h\rangle.
\end{equation}
The Pontrjagin dual $\hat{T}(p)$ may be regarded as 
a subgroup of $\hat{\Tors} $, and hence of $\T(G)$.  Plainly,  
the index $[\hat{\Tors} : \hat{T}(p)]$ equals $\ord(h)$, the order 
of the element $h$ in $\Tors$.

\subsection{Parametrizing translated components}
\label{subsec:trans}
We are now ready to state the main result of this section. 

\begin{prop}
\label{propN}
The positive-dimensional irreducible components of $V_1(X^*)$ 
consist of all the translates of the identity component $\T^0(G)$ 
by the elements of $\hat{T}(p)$ if $g>1$, and by the elements of 
$\hat{T}(p)\setminus \{1\}$ if $g=1$. 
\end{prop}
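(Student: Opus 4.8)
The goal is to pin down the positive-dimensional components of $V_1(X^*)$ when $G=\pi_1(X^*)$ is \emph{not} $1$-formal, so Theorem~\ref{thm:tcone} is unavailable and I cannot simply read off the components from $R_1(X^*)$. The key structural input is that $X^*$ is quasi-K\"{a}hler, so by the results recalled in \S\ref{subsec:cvqk} (Arapura's Theorem~\ref{thm:arapura}) each positive-dimensional component of $V_1(X^*)$ is a \emph{possibly translated} subtorus, and it arises from an admissible map. The plan is to exploit the very rigid geometry here: the \emph{only} relevant admissible map, up to equivalence, is the orbit projection $p\colon X^*\surj C$ itself, since $X^*$ fibers over $C$ with fiber $\C^*$ and $p^*\colon H^1(C,\C)\to H^1(X^*,\C)$ is an isomorphism. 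Thus every positive-dimensional component must be a translate of $S_p=\T^0(G)$, and the whole problem reduces to determining \emph{which} translates actually occur.

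\textbf{Identifying the translating characters.} I would first set up the Leray--Serre / Gysin machinery for the $\C^*$-bundle-like projection $p$. A character $\rho\in\T(G)$ lies on a translate of $\T^0(G)$ that is a component of $V_1$ precisely when the twisted cohomology $H^1(X^*,\C_\rho)$ jumps; restricting $\rho$ to the fiber and using the decomposition \eqref{eq:pont}, the relevant jumps are controlled by how $\rho$ behaves on $\Tors$, and in particular on the class $h\in\Tors$ of \eqref{eq:hh} coming from the fiber. The expectation is that the twisted cohomology along a fiber survives exactly when $\rho$ is trivial on $\im(i_*)=\langle h\rangle$, i.e. when $\rho$ descends to a character of $T(p)=\Tors/\langle h\rangle$ as in \eqref{eq:tp}. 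This is why the answer is phrased in terms of $\hat{T}(p)\subseteq\hat{\Tors}\subseteq\T(G)$: translating $\T^0(G)$ by $\rho\in\hat{T}(p)$ corresponds to pulling back rank-one local systems on $C$ that are compatible with the monodromy of $p$ along the fiber.

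\textbf{The genus dichotomy.} The split between $g>1$ and $g=1$ I would handle through the Euler characteristic condition $\chi(C)<0$ built into Arapura's correspondence (Theorem~\ref{thm:arapura}). For a translate to be a genuine positive-dimensional component one needs the twisted cohomology of the relevant curve-with-local-system to have the right dimension; when $g>1$ the untwisted case ($\rho=1$) already yields a component, so the full coset $\hat{T}(p)$ contributes, whereas when $g=1$ the curve $C$ has $\chi(C)=0$ and the identity-translate ($\rho=1$, i.e. $\T^0(G)$ itself) fails the $\chi<0$ requirement and must be excluded, leaving exactly $\hat{T}(p)\setminus\{1\}$. Concretely, for an elliptic curve a rank-one local system has nonvanishing $H^1$ only when the system is nontrivial, which accounts precisely for the deletion of $1$.

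\textbf{Main obstacle.} The hardest point, and the one I would spend the most care on, is the non-formality: because $TC_1(V_1)=R_1$ \emph{fails} here, I cannot localize the argument near $1$ and must control $V_1(X^*)$ globally across all of $\T(G)$, including away from the identity component. The crux is to prove that \emph{no extra} positive-dimensional components appear beyond the translates of $\T^0(G)$ indexed by $\hat{T}(p)$ --- equivalently, that $p$ is the unique admissible map up to equivalence and that each admissible $\rho$ produces a component of the claimed dimension and no more. I would establish uniqueness of the fibration from the fact that $H^1(X^*,\C)$ is pulled back isomorphically from the single curve $C$ (so there is no room for a second inequivalent admissible map), and then verify the dimension count for each coset representative by a direct twisted-cohomology computation along the $\C^*$-fibration, carefully tracking the torsion character on $h$. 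The bookkeeping relating $\ord(h)=[\hat{\Tors}:\hat{T}(p)]$ to which translates coincide is routine once this structural picture is in place.
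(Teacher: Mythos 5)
Your overall strategy coincides with the paper's: invoke Arapura's classification so that every positive-dimensional component of $V_1(X^*)$ is a (possibly translated) subtorus attached to an admissible map onto a curve $S$ with $\chi(S)\le 0$, use the parametrization of the translates attached to a fixed $f$ by $\hat{T}(f)$ (resp.\ $\hat{T}(f)\setminus\{1\}$ when $\chi(S)=0$), and reduce everything to showing that $p\colon X^*\to C$ is the unique such map up to equivalence. The paper takes the parametrization step as a citation to \cite{Di07}, so your proposed Gysin/twisted-cohomology computation along the $\C^*$-fibration would amount to re-proving that input; that is acceptable in principle, though your side remark about elliptic curves is backwards ($H^1(E,\C_\rho)\ne 0$ exactly when $\rho$ \emph{is} trivial — the exclusion of $1$ when $g=1$ comes from the $\chi<0$ requirement in Arapura's correspondence, as you also say, not from that computation).

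The genuine gap is in the uniqueness step, which you correctly single out as the crux but do not actually prove. Your argument — that $H^1(X^*,\C)=p^*H^1(C,\C)$ leaves ``no room'' for a second inequivalent admissible map — fails as stated, for two reasons. First, it does not exclude admissible maps $f\colon X^*\to S$ onto a rational curve $S=\CP^1\setminus\{k \text{ points}\}$ with $k\ge 3$: such an $f$ would inject the $(k-1)$-dimensional space $H^1(S,\C)$ into the $2g$-dimensional space $H^1(X^*,\C)$, and dimensionally there is plenty of room whenever $k-1\le 2g$. The paper rules this out by a weight argument: $H^1(S,\Q)$ is pure of weight $2$, whereas $H^1(X^*,\Q)\cong H^1(C,\Q)$ is pure of weight $1$, so no injection of mixed Hodge structures can exist. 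Some such global input is unavoidable here, precisely because $1$-formality fails and you cannot fall back on the isotropic-subspace position results valid in the $1$-formal case. Second, for targets $S$ of positive genus your argument does not show $S\cong C$: one could a priori have $f$ with $f^*H^1(S)$ a proper subspace of $p^*H^1(C)$. The paper's route is to observe that every fiber of $p$ is a copy of $\C^*$, hence is contracted by $f$ (a rational curve admits no nonconstant morphism to a curve of positive genus), so $f$ factors as $q\circ p$ with $q\colon C\to S$; connectedness of the generic fibers of $f$ then forces $q$ to be an isomorphism. Without these two arguments your uniqueness claim, and hence the whole classification of components, is unsupported.
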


\begin{proof}
As Arapura showed in \cite{Ar}, any positive-dimensional,  
irreducible component $W$ of $V_1(X^*)$ corresponds to 
a surjective morphism $f\colon X^* \to S$ with connected 
generic fiber, where $S$ is a smooth curve with $\chi(S) \leq 0$. 
As shown in \cite{Di07}, the components associated to such 
a morphism $f$ are parametrized by $\hat T(f)$ when 
$\chi(S) < 0$, and by $\hat T(f) \setminus \{ 1\}$ when $\chi(S) = 0$. 
It thus remains to show that $p\colon X^* \to C$ is the only 
morphism with the properties of $f$ above, up to isomorphisms 
of $C$.

Assume first that $S$ is a curve of genus $h>0$, i.e., the 
projective smooth closure $\overline{S}$ of $S$ has genus $h>0$.
Then each fiber of $p$, being isomorphic to $\C^*$, is mapped 
to a point by $f$ (the only morphisms $\CP^1 \to \overline{S}$
are the constant ones!). We may then factor $f$ as 
$X^*\xrightarrow{p} C \xrightarrow{q}  S$. 
This shows that $S=\overline{S}$.  Moreover, since the  
generic fibers of $f$ are connected, the morphism $q\colon C \to S$ 
has degree $1$, i.e., it is an isomorphism.

Assume now $S$ is a rational curve. Since $\chi(S)\le 0$, the 
curve $S$ is obtained from $\CP^1$ by deleting at least $2$ points. 
Hence, the mixed Hodge structure on $H^1(S,\Q)$ is pure of 
weight $2$. On the other hand, the mixed Hodge structure on 
$H^1(X^*,\Q)$ is pure of weight $1$, since 
$p^*\colon H^1(C,\Q) \to H^1(X^*,\Q)$ is an isomorphism. 
Hence, $f^*\colon H^1(S,\Q) \to H^1(X^*,\Q)$ cannot be injective, 
a contradiction.
\end{proof}

\subsection{Non-formality and the tangent cone formula}
\label{subs:nftcone}
Comparing the conclusions of Propositions \ref{prop:qh} 
and \ref{propN} reveals a rather subtle phenomenon: 
the tangent cone formula from Theorem \ref{thm:tcone} 
may hold, even when the group $G$ is not $1$-formal. 

To see why this is the case, let us first compute the analytic 
germs at the origin for the cohomology jumping loci of
$X^*=X \setminus \{ 0\}$. By Proposition \ref{propN}, 
\begin{equation}
\label{eq:v1x}
(V_1(X^*),1)=
\begin{cases}
(\{ 1\}, 1)& \text{if $g=1$},\\[3pt]
((\C^*)^{2g}, 1)& \text{if $g>1$}.\\
\end{cases}
\end{equation}
On the other hand, $X^*$ is homotopy equivalent to the singularity link $M$.
It follows from Theorem \ref{thm:res3}\eqref{r2} that
\begin{equation}
\label{eq:r1x}
R_1(X^*)=H^1(X^*, \C)= \C^{2g}, \quad
\text{for all $g\ge 1$}. 
\end{equation}

\begin{corollary}
\label{cor:nftcone}
Let $G=\pi_1(X^*)$ as above, and assume $g>1$.  
Then $G$ is not $1$-formal, yet the tangent cone formula 
$TC_1(V_1(G))=R_1(G)$ holds. 
\end{corollary}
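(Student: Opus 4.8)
The plan is to read off both sides of the desired equality directly from the computations that precede the corollary, so the proof amounts to a short bookkeeping argument rather than any new work. The two halves are already in place: equation \eqref{eq:r1x} records that $R_1(X^*)=H^1(X^*,\C)=\C^{2g}$ for every $g\ge 1$, and equation \eqref{eq:v1x} records that the analytic germ $(V_1(X^*),1)$ equals $((\C^*)^{2g},1)$ when $g>1$. First I would invoke Proposition \ref{prop:qh}\eqref{s2} to assert that $G=\pi_1(X^*)$ is not $1$-formal, since $b_1(M)=2g>0$ when $g>1$; this is exactly where the hypothesis $g>1$ (rather than $g\ge 1$) is needed to make the statement interesting, because the non-$1$-formality must coexist with the validity of the tangent cone formula.

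Next I would compute the tangent cone $TC_1(V_1(G))$ explicitly. Since, by \eqref{eq:v1x}, the germ of $V_1(X^*)$ at $1$ is the germ of the full subtorus $(\C^*)^{2g}$, its tangent cone at the identity is the whole tangent space $T_1 (\C^*)^{2g}=\C^{2g}$. Under the identification $T_1\T(G)=H^1(X^*,\C)$ from \S\ref{subs:tct}, this tangent space is precisely $H^1(X^*,\C)=\C^{2g}$. Comparing with \eqref{eq:r1x}, which identifies $R_1(G)=R_1(X^*)$ with the same space $\C^{2g}$, I would conclude that $TC_1(V_1(G))=\C^{2g}=R_1(G)$, which is the asserted tangent cone formula.

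The only subtlety, and the step I would flag as the real content, is that the tangent cone formula here is \emph{not} being deduced from Theorem \ref{thm:tcone}: that theorem requires $1$-formality, which fails by the first step. Instead the equality holds by a direct and somewhat coincidental matching of two independently computed objects---the translated-torus description of $V_1(X^*)$ coming from Arapura's theory via Proposition \ref{propN}, and the resonance variety coming from the $3$-manifold structure via Theorem \ref{thm:res3}\eqref{r2}. I would therefore phrase the proof so as to emphasize that the identity component passing through $1$ happens to be all of $(\C^*)^{2g}$ when $g>1$, so that even though there may be genuinely translated components of $V_1(X^*)$ (the translates by $\hat T(p)$ in Proposition \ref{propN}), those translated pieces do not pass through $1$ and hence contribute nothing to the germ at the origin. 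This is exactly the phenomenon announced in \S\ref{subs:nftcone}: the germ at $1$ is insensitive to the translated tori, so it coincides with the tangent cone computed from the resonance variety, despite the failure of $1$-formality.
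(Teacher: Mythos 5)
Your argument is correct and is essentially the paper's own proof: non-$1$-formality from Proposition \ref{prop:qh}\eqref{s2}, and the tangent cone formula by directly matching \eqref{eq:v1x} against \eqref{eq:r1x} rather than by appealing to Theorem \ref{thm:tcone}. One small misplacement: the hypothesis $g>1$ is not needed for the non-$1$-formality step (which holds for all $g\ge 1$, since $b_1(M)=2g>0$), but only for the germ computation, because when $g=1$ formula \eqref{eq:v1x} gives $(V_1(X^*),1)=(\{1\},1)$ while $R_1(X^*)=\C^{2}$, so the tangent cone formula actually fails there.
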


\begin{proof}
Since $g\ge 1$, Proposition \ref{prop:qh}\eqref{s2} 
insures that $G$ is not $1$-formal.  
Using formulas \eqref{eq:v1x} and \eqref{eq:r1x}, we see 
that the analytic isomorphism from Theorem \ref{thm:tcone},
$\exp \colon (R_1(G), 0) \isom (V_1(G), 1)$, 
holds. In particular, $TC_1(V_1(G))=R_1(G)$.  
\end{proof}

\subsection{Seifert invariants}
\label{subsec:seifert}
As above, let $M$ be the $3$-manifold associated to the 
quasi-homogenous singularity $(X,0)$. The $S^1$-equivariant 
diffeomorphism type of $M$ is determined by the following 
Seifert invariants associated to the projection 
$\left.p \right|_{M}\colon M\to C$:
\begin{itemize}
\item The exceptional orbit data,   
$((\alpha _1,\beta _1), \dots , (\alpha_m,\beta_m))$, 
with $\alpha_i, \beta_i\in \Z$, $\alpha_i> 1$, and 
$\gcd(\alpha_i,\beta_i)=1$.
\item The genus $g=g(C)$ of the base curve $C=M/S^1$, 
with $g\ge 0$.
\item The Euler number $e$  of the Seifert fibration, 
with $e\in \Q$, and $e<0$. 
\end{itemize}
For full details, see \cite{Ne1, Ne2, NR, OW}. 

Since $e\ne 0$, we have $H_1(M,\Z)=\Z^{2g}\oplus \Tors$, 
with $\Tors$ finite, as in \S\ref{subsec:pont}. Let $h\in \Tors$ be 
the element defined by \eqref{eq:hh}. Using the proof 
of Theorem 4.1 from \cite{NR}, we compute
\begin{equation}
\label{eq:seifert}
\abs{\Tors}=\alpha_1\cdots \alpha_m \cdot \abs{e}, \quad 
\ord (h)=\lcm (\alpha_1,\dots ,\alpha_m) \cdot \abs{e}.
\end{equation}
It follows that the group $T(p)=\Tors/\langle h\rangle$ from 
\eqref{eq:tp} has order 
\begin{equation}
\label{eq:alpha}
\alpha=\alpha_1\cdots \alpha_m / \lcm(\alpha_1,\dots ,\alpha_m).
\end{equation}

\begin{remark}
\label{rem:multiple}
The morphism $p\colon X^* \to C$ has exactly $m$ multiple fibers, 
each isomorphic to $\C^*$, and with multiplicities $\alpha_1,\dots ,\alpha_m$.
Thus, formula \eqref{eq:alpha} for the order of $T(p)$ may alternatively 
be obtained from \cite[Theorem 5.3(i)]{Di07}.
\end{remark}

Let $G=\pi_1(M)$.  Recall that the character variety $\T(G)$ can be 
identified with $\T^0(G)\times \hat{\Tors}$, a disjoint union 
of $\abs{\Tors}$ copies of the algebraic torus $\T^0(G)=(\C^*)^{2g}$. 
Proposition \ref{propN} now yields the following corollary.
\begin{corollary}
\label{cor:v1seifert}
The positive-dimensional components of $V_1(M)$ 
are: $\T^0(G)$ if $g>1$, and $\alpha-1$  
translated copies of $\T^0(G)$, for any $g\ge 1$. 
\end{corollary}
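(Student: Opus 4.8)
The plan is to derive this corollary directly from Proposition \ref{propN}, which already identifies the positive-dimensional components of $V_1(X^*)$ as the translates of $\T^0(G)$ by the elements of $\hat{T}(p)$ (when $g>1$) or by those of $\hat{T}(p)\setminus\{1\}$ (when $g=1$). Since $X^*$ is homotopy equivalent to the singularity link $M$, we have $V_1(M)=V_1(X^*)$, and the two meanings of $\T^0(G)$ agree under $\pi_1(X^*)\cong\pi_1(M)$. It therefore suffices to count the elements of $\hat{T}(p)$ in terms of the Seifert invariants.

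First I would invoke Pontrjagin duality for finite abelian groups, which gives $\abs{\hat{T}(p)}=\abs{T(p)}$, and then identify this common order with the quantity $\alpha$ from \eqref{eq:alpha}. Concretely, this follows by combining the definition $T(p)=\Tors/\langle h\rangle$ with the two Seifert computations in \eqref{eq:seifert}, namely $\abs{\Tors}=\alpha_1\cdots\alpha_m\cdot\abs{e}$ and $\ord(h)=\lcm(\alpha_1,\dots,\alpha_m)\cdot\abs{e}$. The Euler number $e$ cancels in the quotient, leaving $\abs{T(p)}=\alpha_1\cdots\alpha_m/\lcm(\alpha_1,\dots,\alpha_m)=\alpha$.

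With the count $\abs{\hat{T}(p)}=\alpha$ in hand, the corollary is immediate. For $g>1$, the $\alpha$ translates are indexed by $\hat{T}(p)$; the translate corresponding to the identity $1\in\hat{T}(p)$ is $\T^0(G)$ itself, while the remaining $\alpha-1$ elements give the nontrivial translated copies. For $g=1$, Proposition \ref{propN} excludes the identity, so one is left with exactly the $\alpha-1$ nontrivial translates and no untranslated component. This accounts for both clauses of the statement simultaneously.

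Since every ingredient---Proposition \ref{propN}, the homotopy equivalence $X^*\simeq M$, Pontrjagin duality, and formulas \eqref{eq:seifert}--\eqref{eq:alpha}---is already available, there is no substantive obstacle; the proof is a short assembly. The only point requiring care is the bookkeeping that cleanly separates the identity translate $\T^0(G)$ (present precisely when $g>1$) from the $\alpha-1$ proper translates (present for all $g\ge1$), so that the two regimes $g=1$ and $g>1$ are recorded correctly.
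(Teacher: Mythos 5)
Your proposal is correct and follows the same route as the paper: the paper likewise deduces the corollary immediately from Proposition~\ref{propN} together with the order computation $\abs{T(p)}=\alpha$ obtained from \eqref{eq:seifert} and \eqref{eq:alpha}, using the identification $V_1(M)=V_1(X^*)$ and the splitting of $\T(G)$ into copies of $\T^0(G)$ indexed by $\hat{\Tors}$. Your bookkeeping separating the identity translate (present only for $g>1$) from the $\alpha-1$ proper translates matches the intended reading of the statement.
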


In particular, $V_1(M)$ contains positive-dimensional 
components not passing through the origin if and only if 
$\alpha\ne 1$.

\begin{example}
\label{ex:bundle}
When $M\to C$ is a bona-fide circle bundle, with Euler 
class $e\in \Z$, we may take $m=0$, in which case   
$\abs{\Tors}=\ord(h)=\abs{e}$.  Thus, $V_1(M)$ has 
positive-dimensional components only if $g>1$---and 
then the only such component is $\T^0(G)$.
\end{example} 

\subsection{Brieskorn manifolds}
\label{subsec:brieskorn}
Let $(a_1,\dots ,a_n)$ be an $n$-tuple of integers, 
with $a_j\ge 2$. Consider the variety $X$ in $\C^n$ 
defined by the equations
$c_{j1}x_1^{a_1}+\cdots +c_{jn}x_n^{a_n}=0$, for $1\le j\le n-2$. 
Assuming the matrix of coefficients $(c_{jk})$ has all maximal 
minors non-zero, $X$ is a quasi-homogeneous surface, whose 
singularity link at $0$ is the well-known Brieskorn manifold 
$M=\Sigma(a_1,\dots, a_n)$. 

Set $\ell=\lcm (a_1,\dots, a_n)$,  
$\ell_j=\lcm (a_1,\dots, \widehat{a_j}, \dots, a_n)$, 
and $a=a_1\cdots a_n$. Computations from \cite{NR, Ne1} 
show that the Seifert invariants of $M$ are:
\begin{itemize}
\item $(s_1(\alpha_1,\beta_1),\dots,s_n(\alpha_n,\beta_n))$, 
with $\alpha_j= \ell/\ell_j$,  $\beta_j \ell\equiv a_j\, 
(\operatorname{mod}\, \alpha_j)$, and 
$s_j=a/(a_j \ell_j)$, where $s_j(\alpha_j,\beta_j)$ signifies $(\alpha_j,\beta_j)$ 
repeated $s_j$ times, unless $\alpha_j=1$, in which case $s_j(\alpha_j,\beta_j)$ 
is to be removed from the list.
\item $g=\frac{1}{2}\big(2+(n-2) a/\ell -\sum_{j=1}^{n} s_j \big)$.
\item $e=-a/\ell^2$. 
\end{itemize}

The case $n=3$ was studied in detail by Milnor in \cite{Mi}. 
The simplest situation is when $\ell_1=\ell_2=\ell_3=\ell$, in   
which case $\alpha_j=1$, for all $j$. Therefore, 
$M=\Sigma(a_1,a_2,a_3)$ 
is a smooth circle bundle over $\Sigma_g$, with Euler 
number $e$ as above, and the positive-dimensional components  
of $V_1(M)$ are as described in Example \ref{ex:bundle}. 

\begin{example}
\label{ex:nilpotent}
As noted by Milnor \cite{Mi}, the manifolds $M_1=\Sigma(2,3,6)$, 
$M_2=\Sigma(2,4,4)$, and $M_3=\Sigma(3,3,3)$ are 
all nilmanifolds---in fact, circle bundles over the torus, 
with Euler number $1$, $2$, and $3$, respectively.  
We know from the above that $V_1(M_i)$ has no 
positive-dimensional components. More is true, though: 
as shown in \cite[Theorem 1.1]{MP}, $V_1(M_i)=\{1\}$.   
In other words, there are no isolated points 
in $V_1(M_i)$, either, besides the identity. 
\end{example}

In general, though, the first characteristic variety of a Brieskorn 
manifold will contain translated components.
 
\begin{example}
\label{ex:336}
The manifold $M=\Sigma(3,3,6)$ has Seifert invariants 
$((2,1), (2,1), (2,1))$, $g=1$, and $e=-3/2$.  
By Corollary \ref{cor:v1seifert}, $V_1(M)$ has 
$3$ positive-dimensional irreducible components, 
all of dimension $2$, none of which passes through 
the identity.
\end{example}

\begin{ack}
Much of this work was done during visits at Universit\'{e} de 
Nice--Sophia Antipolis by A.~S. (September, 2007) and S.~P. 
(June 2008).  Both these authors thank the Laboratoire 
Jean A. Dieudonn\'{e} for its support and hospitality during 
their stay in Nice, France. Additionally, A.~D. is grateful to 
ASSMS, Government College University, Lahore, Pakistan, 
where part of the work on this paper was done.
\end{ack}

%\vspace*{-3pc}

\newcommand{\arxiv}[1]
{\texttt{\href{http://arxiv.org/abs/#1}{arXiv:#1}}}

\renewcommand{\MR}[1]
{\href{http://www.ams.org/mathscinet-getitem?mr=#1}{MR#1}}

\end{document}